\newtheorem{theorem}{Theorem}[section]
\newtheorem{theorem*}{Theorem}
\newtheorem{corollary}[theorem]{Corollary}
\newtheorem{lemma}[theorem]{Lemma}
\newtheorem{proposition}[theorem]{Proposition}
\newtheorem{definition}[theorem]{Definition}
\newtheorem{example}[theorem]{Example}
\numberwithin{equation}{section}
\newcommand{\fini}{{\operatorname{end}}}
\newcommand{\fin}[1]{\fini({#1})}
\newcommand{\com}[1]{\operatorname{beg}({#1})}
 \title[Gaps in binary cyclotomic polynomials]{Gaps in binary cyclotomic polynomials}
\begin{document}

\markboth{A. Cafure, E. Cesaratto}{Gaps in binary cyclotomic polynomials}

\title{Gaps in binary cyclotomic polynomials}

\author{Antonio Cafure}

\address{A. Cafure\\ Instituto del Desarrollo Humano\\ Universidad Nacional de General Sarmiento $\&$ CONICET\\
Los Polvorines, Buenos Aires, Argentina\\
\newline{acafure@campus.ungs.edu.ar} }

\author{Eda Cesaratto}

\address{E. Cesaratto\\ Instituto del Desarrollo Humano\\ Universidad Nacional de General Sarmiento $\&$ CONICET\\
Los Polvorines, Buenos Aires, Argentina
\\
\newline{ecesaratto@campus.ungs.edu.ar} }

\begin{abstract}
For odd prime numbers $p < q$, let $\Phi_{pq} \in \mathbb{Z}[X]$ be the binary cyclotomic polynomial of order $pq$. In this paper, we prove that  the second gap of  $\Phi_{pq}$ is the maximum of $r-1$ and $p-r-1$, where $r$ is the remainder of $q$ divided by $p$. For $q$  congruent to $\pm 1$ modulo $p$, we determine the number of gaps for each possible length. To obtain these results, we develop a new approach in which the coefficients of $\Phi_{pq}$ are described as concatenations of words arising from iterations of a circular map.
\end{abstract}

\maketitle

\keywords{Sequences; Binary Cyclotomic 
Polynomials; Words; Gaps}

\subjclass{Mathematics Subject Classification 2020:  11B83, 11C08, 68R15}


\section{Introduction}

For an integer $n\ge 1$, we denote by $\Phi_{n}(X)$ the \emph{cyclotomic polynomial} of order $n$, that is, the monic polynomial in $\mathbb{Z}[X]$ of degree $\varphi(n)$ whose roots are the primitive roots of unity of order $n$ (here $\varphi$ is Euler's totient function). The goal of this paper is to study the set of gaps of \emph{binary cyclotomic polynomials} $\Phi_{pq}$ with $p < q$ prime numbers. The study of gaps of cyclotomic polynomials started with an application to cryptography in~\cite{Hong12} and now it is a problem of independent interest (see the survey~\cite{Sanna22}). 

The \emph{gapset} of a polynomial $\Phi=c_{n}X^{e_{n}} +  c_{n-1}X^{e_{n-1}} + \cdots  +  c_{0}X^{e_{0}}$ with   nonzero coefficients $c_{n}, c_{n-1}, \ldots, c_{0}$  and strictly decreasing exponents $e_{n}>e_{n-1} > \cdots > e_{0}$ is defined as 
\[G(\Phi)=\{e_{i+1}-e_{i} : 0\le i\le n-1\},\]
the set of  differences between the exponents of its consecutive nonzero monomials. We refer to each element of $G(\Phi)$ different from 1 as a \emph{gap} or as the \emph{length of a gap}.

The gapset $G(\Phi_{p})$ of the cyclotomic polynomial $\Phi_{p}=X^{p-1}+X^{p-2}+\cdots + X+1$ of prime order $p$ is $ \{1\}$. From the well-known identity $\Phi_{2q} (X) = \Phi_{q}(-X)$ for an odd prime $q$, it is also immediate that $G(\Phi_{2q}) = \{1\}$. 

Thus, the first nontrivial case is the gapset $G(\Phi_{pq})$ for distinct odd prime numbers $p<q$. The maximum 
of $G(\Phi_{pq})$, 
\begin{align*} g_{1}(\Phi_{pq})&=\max G(\Phi_{pq}),
\end{align*}
 is called the \emph{maximum} or \emph{first} gap of the polynomial $\Phi_{pq}$. 
 
 It is known that $g_{1}(\Phi_{pq}) = p-1$ and that there are precisely $2\lfloor q/p\rfloor$ such gaps. Different proofs of these results can be found in~\cite{Hong12},~\cite{Moree14},~\cite{Camburu16}, and~\cite{Zhang16}. 

In the particular case of $q$ congruent to $\pm 1$ modulo $p$, Camburu et al.\ proved in \cite[Theorem 3.2 (ii)]{Camburu16} that $\Phi_{pq}$ has gaps of length $i$ for each $2\le i \le p-1$.

Beyond the binary case, the results established in~\cite{AlKateeb20} state that if $m$ is squarefree, $q$ is prime, and $m < q$, then $g_{1}(\Phi_{mq}) = \varphi(m)$.
A summary of the results stated above can be found in the survey article~\cite{Sanna22} which presents a state-of-the-art of what is known about the dense representation of cyclotomic polynomials.

To the best of our knowledge, there are no previous results concerning smaller gaps of $\Phi_{pq}$. In this article we present results concerning what we call the second gap of binary cyclotomic polynomials.
If $G(\Phi_{pq})\setminus \{ g_{1}(\Phi_{pq})\}$ has elements distinct from $1$, the \emph{second gap} of $\Phi_{pq}$ is the number
\begin{align*}  g_{2}(\Phi_{pq})&=\max( G(\Phi_{pq})\setminus \{ g_{1}(\Phi_{pq})\}). 
\end{align*}

Our Theorem~\ref{thm:teorema 1} provides the exact value of $g_2(\Phi_{pq})$ and a lower bound for the number of such gaps. 
\begin{theorem}\label{thm:teorema 1} Let $5\le p < q$ be prime numbers, let $\lfloor q/p\rfloor$ be  the quotient, and let $r$ be the remainder of the division of $q$ by $p$. The second gap $g_2(\Phi_{pq})$ is equal to $\max\{r-1,p-r-1\}$. 
Moreover, there are at least $2\lfloor q/p\rfloor$ such gaps.  
\end{theorem}

In this paper, we also expand the results of \cite[Theorem 3.2 (ii)]{Camburu16}, which deals with $q \equiv \pm 1
\!\!\!\mod \!p$, by determining the number of gaps of  length $i$ for each $2\le i\le p-1$. 

\begin{theorem}\label{thm:teorema 2} Let $ p < q$ be odd prime numbers, let $\lfloor q/p\rfloor$ be the quotient, and let $r$ be the remainder of the division of $ q$ by $p$. 

\begin{enumerate}
\item  If $p=3$, then $\Phi_{3q}$ has exactly $2\lfloor q/3\rfloor$ gaps of length $2$. 

\smallskip

\item If $p\ge 5$ and $r=1$, then
 $\Phi_{pq}$ has exactly $2\lfloor q/p\rfloor$ gaps of length $i$ for each $2\le i\le p-1$. 

\smallskip

 \item If $p\ge 5$ and $r=p-1$, then $\Phi_{pq}$ has exactly $2\lfloor q/p\rfloor$ gaps of length $p-1$ and $2\lfloor q/p\rfloor +2$ gaps of length $i$ for each $2\le i\le p-2$. 
 \end{enumerate}
\end{theorem} 

We remark that these results also remain valid for the so-called semigroup polynomials when $p$ and $q$ are relatively prime numbers, but not necessarily prime (see \cite{Moree14}).

To obtain our results we develop a novel approach that enables the efficient location of nonzero coefficients. 
 We appeal to our representation theorem given in \cite[Theorem 1]{CaCe21}, restated here as Theorem~\ref{thm:representation theorem}. Our approach complements those of Lenstra \cite{Lenstra79} and Lam–Leung \cite{LaLe99} (see also \cite{Moree14}). 
Their explicit formulas for the coefficients of $\Phi_{pq}$ are primarily arithmetic and focus on individual coefficients. Our word-based approach emphasizes the combinatorial structure, and is therefore better suited for the study of gaps.

 In our framework, the vector of coefficients of $\Phi_{pq}$ is represented as a word over the ternary alphabet $\{-1, 0, 1\}$ and is expressed compactly as concatenations and fractional powers of $p-1$ words $\boldsymbol{\omega}_i$ of length $p$, namely the \emph{basic words}, rather than as a sequence of $(p-1)(q-1) + 1$ individual coefficients of $\Phi_{pq}$. 
 Furthermore, there is a close connection between the index $i$ and the length of the gaps. In fact, we prove that the word $\boldsymbol{\omega}_0$ contains the largest gap, the word $\boldsymbol{\omega}_1$ contains the second largest gap, and for $i \ge 2$, every gap in $\boldsymbol{\omega}_{i}$ is at most as large as the second largest gap.
 
 Since these basic words are partial sums of the orbit of an initial word (of length $p$) under a circular permutation, we study the interplay between dynamics and arithmetic which allows us to control both the length of the gaps and their positions.
 First, we control the positions of nonzero coefficients within basic words. Then, in a second step, we show that concatenating these basic words preserves the number and length of the gaps, with the exception of a few cases that are handled separately. 
 
 In addition, we also use two elementary facts: cyclotomic polynomials are reciprocal (see, for instance,~\cite{Niven56} or~\cite{Cafure21} for a new proof), and the middle coefficient of $\Phi_{pq}$ is nonzero (see, for instance, Theorem 2.1 in the survey~\cite{Sanna22}).

\subsection*{Structure of the paper}
In Section~\ref{section:our results}, we introduce the terminology on words used throughout the article. In particular, we define the notion of a gap of a word, extending the notion of a gap of a polynomial. We also state our representation theorem (\cite[Theorem 1]{CaCe21}). 

In Section~\ref{section:the second gap} we state Theorem~\ref{thm:second gap}, which provides the locations of the maximum gaps within the vector of coefficients of $\Phi_{pq}$, expanding \cite[Theorem 3.2 (i)]{Camburu16}, and the locations of the first $2\lfloor q/p\rfloor$ second gaps. We give a proof for $p \geq 5$ and $2\leq r \leq p-2$. 

In Section~\ref{section:gaps and a representation theorem} we deal with 
 the cases $r = 1$ and $r = p-1$ (which include $p =3$), completing the proof of Theorem~\ref{thm:second gap}. The main result of the section is Theorem~\ref{thm:descripcion word pm1}, a representation theorem for the vector of coefficients of $\Phi_{pq}$ for $q \equiv \pm 1
\!\!\!\mod \!p$, well-adapted to the study of gaps. For $p=3$, Theorem~\ref{thm:descripcion word pm1} provides a word-based formulation of a theorem by Habermehl, Richardson and Szwajkos on the coefficients of $\Phi_{3q}$, published in~\cite{Habermehl64}. 

Theorem~\ref{thm:teorema 1} is a consequence of Theorem~\ref{thm:second gap}. 
Theorem~\ref{thm:teorema 2} follows from Theorem~\ref{thm:descripcion word pm1} and its proof is given in Section~\ref{subsec:completing proof of theorem 2}. 

\section{A representation theorem} \label{section:our results}
\subsection{Notations}\label{sect: results-notations} We use standard notations from combinatorics on words (see, for instance, \cite{Lothaire97}). 
Given a word $\boldsymbol{u}$ over an alphabet $\mathcal{A}$ (a finite nonempty set), we denote by $|\boldsymbol{u}|$ its length. The set $\mathcal{A}^{n}$ consists of all words of length $n$. 
We write $\mathcal{A}^{*} = \bigcup_{n \in \mathbb{Z}_{\ge 0}} \mathcal{A}^{n}$ and $\mathcal{A}^{+} = \mathcal{A}^{*}\setminus \{\epsilon\}$, where $\epsilon$ is the empty word. 

We write $\boldsymbol{u}[j]$ for the $j$-th letter of $\boldsymbol{u}$ and $\boldsymbol{u}[j:t]$ for the subword of $\boldsymbol{u}$ beginning at position~$j$ and ending at position~$t$ for $0 \le j \le t < |\boldsymbol{u}|$.

The concatenation of two words $\boldsymbol{u}$ and $\boldsymbol{v}$ is written as $\boldsymbol{u}\boldsymbol{v}$ and, for $s \in \mathbb{Z}_{\ge 0}$, the {$s$-power} of $\boldsymbol{u}$ is written as $\boldsymbol{u}^{s}$. 
For positive integers $k$ and $n$, the remainder of the division of $k$ by $n$ is denoted by $k \!\!\! \mod \! n$. 
For $\boldsymbol{u}\in \mathcal{A}^{n}$, the \emph{fractional power} $\boldsymbol{u}^{k/n}$ of $\boldsymbol{u}$ is the element of $\mathcal{A}^{k}$ defined as follows:
\begin{itemize}
\item If $k < n$, then $\boldsymbol{u}^{k/n}$ is the word $\boldsymbol{u}[0:k-1] = u_{0}\cdots u_{k-1}$.

\medskip

\item If $k \ge n$, then 
$
\boldsymbol{u}^{k/n} = \boldsymbol{u}^{\lfloor k/n \rfloor}\boldsymbol{u}^{(k \!\!\mod \! n)/n}$.
\end{itemize}

From now on, the alphabet is $\mathcal{A} =\{-1,0,1\}$. Following the ideas of ~\cite{Camburu16}, we first introduce the notion of gapblock of a word. 
Any $\boldsymbol{u}\in \mathcal{A}^{+}$ having at least a zero letter can be uniquely decomposed as a concatenation of blocks of zeros and blocks of nonzero symbols as follows: 
\begin{equation}\label{eq:word decomposition}\boldsymbol{u}=\boldsymbol{u}_{0}0^{k_1}\boldsymbol{u}_1 0^{k_2}\boldsymbol{u}_2\cdots \boldsymbol{u}_{t-1}0^{k_t} \boldsymbol{u}_t. 
\end{equation}
Here $t, k_{1},\dots,k_{t}\in \mathbb Z_{\ge 1}$, the words $ \boldsymbol{u}_{0}, \boldsymbol{u}_{t} \in \{-1,1\}^{*}$, and the words $ \boldsymbol{u}_1,\dots,\boldsymbol{u}_{t-1} \in \{-1,1\}^{+}$. 

The words $0^{k_1}, \dots, 0^{k_t}$ in \eqref{eq:word decomposition} are called the \emph{gapblocks} of $\boldsymbol{u}$ and the numbers $k_1+1,\dots, k_t+1$ are called the \emph{gaps} of
$\boldsymbol{u}$. For each $\boldsymbol{u}\in \mathcal{A}^{+}$ having at least a zero letter, 
the \emph{gapset} of $\boldsymbol{u}$ is \[G(\boldsymbol{u})=\{k_{i} +1, 1\le i\le t\}.\]
The \emph{maximum or first gap} and the \emph{second gap} of $\boldsymbol{u}$ are defined as 
\[g_1(\boldsymbol{u})=\max(\boldsymbol{u})\quad \text{ and }\quad g_2(\boldsymbol{u})=\max \left(G(\boldsymbol{u})\setminus\{g_1(\boldsymbol{u})\}\right).\]

A word $\boldsymbol{u}\in \{-1,1\}^{*}$ does not have gapblocks. The gaps are defined by adding $1$ to the lengths of the gapblocks, so as to coincide with the notion of a gap of a polynomial.

\begin{example}\label{example: decomposition of a word} {\rm Let $\boldsymbol{u}$ be the word 
\[\boldsymbol{u} = 1(-1)000001(-1)000001(-1)01(-1)001(-1)0 1(-1)001(-1)0 1(-1)0 10 . \] 
The decomposition of $\boldsymbol{u}$ according to \eqref{eq:word decomposition} is as follows:
\begin{align*}\boldsymbol{u} =&\underbrace{1(-1)}_{\boldsymbol{u}_{0}}0^{5} \underbrace{1(-1)}_{\boldsymbol{u}_{1}}0^5\underbrace{1(-1)}_{\boldsymbol{u}_{2} }0\underbrace{1(-1)}_{\boldsymbol{u}_{3}}0^2 \underbrace{1(-1)}_{\boldsymbol{u}_{4}}0\underbrace{1(-1)}_{\boldsymbol{u}_{5}}0^2\underbrace{1(-1)}_{\boldsymbol{u}_{6}}0 
\underbrace{1(-1)}_{\boldsymbol{u}_{7}}0\underbrace{1}_{\boldsymbol{u}_{8}} 0 \underbrace{\epsilon}_{\boldsymbol{u}_{9}}.
 \end{align*}
 The distinct gapblocks of $\boldsymbol{u}$ are $0^5, 0^2, 0$ of length $5,2,1$, respectively. Thus, $G (\boldsymbol{u}) = \{6,3,2\}$. }
\end{example}
\subsection{The representation theorem}
We begin by defining our main object of study, the \emph{binary cyclotomic word} $\boldsymbol{a}_{pq}$. 
 
\begin{definition} Let $\mathcal{A}=\{-1,0,1\}$, let $p < q$ be prime numbers, and let $m = \varphi(pq) = (p-1)(q-1)$. The \emph{binary cyclotomic word} $\boldsymbol{a}_{pq}\in \mathcal{A}^{m + 1}$ is the word whose letters are the coefficients of the binary cyclotomic polynomial $\Phi_{pq}$. 
\end{definition}
Theorem~\ref{thm:representation theorem} below, Theorem~1 in our article~\cite{CaCe21}, is what we refer to as our representation theorem. It describes the structure of $\boldsymbol{a}_{pq}$ as a concatenation of $p-1$ words of length $p$ and its fractional powers. Its proof is based on a transfer between algebra and combinatorics on words. We now introduce the remaining definitions. 

 The mapping $\sigma: \mathcal{A}^{p} \rightarrow \mathcal{A}^{p}$ defined by 
 \begin{align*} 
 \sigma( u_{0} u_{1} \cdots u_{p-2} u_{p-1})
 = u_{1} \cdots u_{p-2} u_{p-1}u_{0}
 \end{align*}
is called the \emph{left circular permutation}. For $s\in \mathbb{Z}_{\geq 0}$, the $s$-th iterate of $\sigma$ is denoted by $\sigma^{s}$.
 
Starting with $\boldsymbol{d}_{0} = 1(-1)0 \cdots 0 \in \mathcal{A}^{p}$, we recursively define the following words in $\mathcal{A}^{p}$:
\begin{equation}
\label{defi:definition of di}
\boldsymbol{d}_i = \sigma^{q}( \boldsymbol{d}_{i-1}) = \sigma^{iq}( \boldsymbol{d}_0), \quad \text{for } i \in \llbracket 1, p-1\rrbracket.
\end{equation} 
Here the symbol $\llbracket{a},{b}\rrbracket$ denotes the set $\{k \in \mathbb{Z}:a \leq k \leq b\}$.

Finally, we introduce an addition operation between words of equal length analogous to the usual addition of vectors. Given words $\boldsymbol{u} = u_{0} u_{ 1}\cdots u_{k-1}$ and $\boldsymbol{v} = v_{0}v_{1}\cdots v_{k-1}$ over $\mathbb{Z}$, their sum is the word over $\mathbb{Z}$ defined by $\boldsymbol{u} + \boldsymbol{v} = (u_{0} + v_{0})(u_{1} + v_{1} )\cdots (u_{k-1} + v_{k-1})$.

We now define the following $p$ words in $\mathcal{A}^{p}$, which we call \emph{basic words}:
\begin{equation}\label{eq:definition of omega_i}
\boldsymbol{\omega}_0 = \boldsymbol{d}_0, \qquad \boldsymbol{\omega}_i = \boldsymbol{\omega}_{i-1} + \boldsymbol{d}_i, \quad \text{for } i \in \llbracket 1, p-1\rrbracket.
\end{equation}
By \cite[Lemma 1]{CaCe21}, it follows that $\boldsymbol{\omega}_{i}$ belongs to $\mathcal{A}^{p}$ for each $i \in \llbracket 1, p-1\rrbracket$. 

\begin{theorem}\label{thm:representation theorem}
 Let $p < q$ be prime numbers and let $r$ denote the remainder of the division of $q$ by $p$. The following assertions hold: 
\begin{enumerate}
\item\label{item thm1 a} The words $\boldsymbol{\omega}_0, \ldots, \boldsymbol{\omega}_{p-2}$ depend only on the pair $(p,r)$. 
\item The binary cyclotomic word 
$\boldsymbol{a}_{pq}$ 
 is obtained as the concatenation
 \[
 \boldsymbol{a}_{pq}= \boldsymbol{\omega}_{0}^{q/p}
\boldsymbol{\omega}_{1}^{q/p} \cdots \,\boldsymbol{\omega}_{p-3}^{q/p}\,\, \boldsymbol{\omega}_{p-2}^{(q-p+2)/p}.
 \]
\end{enumerate}
\end{theorem}

In the next example we use Theorem~\ref{thm:representation theorem} to obtain the decomposition of the binary cyclotomic word $\boldsymbol{a}_{7 \cdot 17}$. First we provide the dense representation of $\Phi_{7 \cdot 17}$:
\medskip
\begin{align*}\Phi_{7\cdot 17}&=X^{96} - X^{95} + X^{89} - X^{88} + X^{82} - X^{81} + X^{79} - X^{78} + X^{75} - X^{74} + X^{72}\\ 
&- X^{71} + X^{68} - X^{67} + X^{65} - X^{64} + X^{62} - X^{60}+ X^{58} - X^{57} + X^{55} - X^{53} \\ 
& + X^{51} - X^{50} + X^{48} - X^{46} + X^{45} - X^{43} + X^{41} - X^{39} + X^{38} - X^{36} + X^{34}\\& - X^{32} + X^{31} - X^{29} + X^{28} - X^{25} + X^{24} - X^{22} + X^{21} - X^{18}+ X^{17} - X^{15} 
\\
&+ X^{14} - X^{8} + X^{7} - X + 1.
\end{align*}

\begin{example}\label{example:first example}{\rm 
 Let $p = 7 $ and let $q > 7$ be  any prime such that  $r=3$. 
 
 Starting with $\boldsymbol{d}_{0} = 1(-1)00000$, we compute the words
$\boldsymbol{d}_{i} = \sigma^{q}(\boldsymbol{d}_{i-1}) =  \sigma^{3} (\boldsymbol{d}_{i-1})$  for  $i \in \llbracket 1,  5\rrbracket$:  
\begin{align*}
 \boldsymbol{d}_{0} & = 1(-1)00000  &      \boldsymbol{d}_{1} & = 00001(-1)0 & \boldsymbol{d}_{2} & = 01(-1)0000
 \\
 \boldsymbol{d}_{3} & = 000001(-1) & \boldsymbol{d}_{4} & = 001(-1)000  &      \boldsymbol{d}_{5} & =(-1)000001.  
 \end{align*}
 Next, we compute the basic words: 
 \begin{align*}
 \boldsymbol{\omega}_{0} & = 1(-1)00000 & \boldsymbol{\omega}_{1} & = 1(-1)001(-1)0 & \boldsymbol{\omega}_{2} & = 10(-1)01(-1)0 
 \\
\boldsymbol{\omega}_{3} & = 10(-1)010(-1) & \boldsymbol{\omega}_{4} & = 100(-1)10(-1) & \boldsymbol{\omega}_{5} & = 000(-1)100.
 \end{align*}
The fractional powers  $\boldsymbol{\omega}_{i}^{3/7}$ are given by
 \begin{align*}
 \boldsymbol{\omega}_{0}^{3/7}& = 1(-1)0 & \boldsymbol{\omega}_{1}^{3/7} & = 1(-1)0 & \boldsymbol{\omega}_{2}^{3/7} & = 10(-1)
 \\
\boldsymbol{\omega}_{3}^{3/7} & = 10(-1)  & \boldsymbol{\omega}_{4}^{3/7} & = 100  & \boldsymbol{\omega}_{5}^{3/7} & = 000 .
 \end{align*} 
Since   $\varphi(7 q) = 6(q-1)$,  Theorem~\ref{thm:representation theorem} implies that $\boldsymbol{a}_{7 q}$ is the following word in $\mathcal{A}^{6(q-1) + 1}$:
\begin{align*}
  \boldsymbol{a}_{7 q} 
  & = \boldsymbol{\omega}_{0}^{q/7}   \boldsymbol{\omega}_{1}^{q/7} \boldsymbol{\omega}_{2}^{q/7} \boldsymbol{\omega}_{3}^{q/7}\boldsymbol{\omega}_{4}^{q/7}\boldsymbol{\omega}_{5}^{(q-5)/7} .
  \end{align*}
  
In particular, the  cyclotomic word $\boldsymbol{a}_{7\cdot 17}$ is given by 
\begin{align*}\boldsymbol{a}_{7\cdot 17}&=\underbrace{1(-1)00000}_{\boldsymbol{\omega}_{0}} \underbrace{1(-1)00000}_{\boldsymbol{\omega}_{0}}\underbrace{1(-1)0}_{\boldsymbol{\omega}_{0}^{3/7} } \underbrace{1(-1)001(-1)0}_{\boldsymbol{\omega}_{1}} \underbrace{1(-1)001(-1)0}_{\boldsymbol{\omega}_{1}}\underbrace{1(-1)0}_{\boldsymbol{\omega}_{1}^{3/7} } \\
&\underbrace{10(-1)01(-1)0}_{\boldsymbol{\omega}_{2}} \underbrace{10(-1)01(-1)0}_{\boldsymbol{\omega}_{2}}\underbrace{\boldsymbol{1}0(-1)}_{\boldsymbol{\omega}_{2}^{3/7} } \underbrace{10(-1)010(-1)}_{\boldsymbol{\omega}_{3}} \underbrace{10(-1)010(-1)}_{\boldsymbol{\omega}_{3}}\underbrace{10(-1)}_{\boldsymbol{\omega}_{3}^{3/7} }\\
&\underbrace{100(-1)10(-1)}_{\boldsymbol{\omega}_{4}} \underbrace{100(-1)10(-1)}_{\boldsymbol{\omega}_{4}}
\underbrace{100}_{\boldsymbol{\omega}_{4}^{3/7} } \underbrace{000(-1)100}_{\boldsymbol{\omega}_{5}} \underbrace{000(-1)1 }_{\boldsymbol{\omega}_{5}^{5/7}  }.
\end{align*}

The word $\boldsymbol{a}_{7\cdot 17}$ is reciprocal, i.e., it is equal to its reverse. This implies that  the word $\boldsymbol{a}_{7\cdot 17}$ may be written as 
\[\boldsymbol{a}_{7\cdot 17}=\boldsymbol{c}_{7\cdot 17}\,\boldsymbol{1}\, \widetilde{\boldsymbol{c}_{7\cdot 17}},\]
where    $\widetilde{\boldsymbol{c}_{7\cdot 17}}$ is the word obtained by reversing the letters of $\boldsymbol{c}_{7\cdot 17}$. Notice   that $G(\boldsymbol{a}_{7\cdot 17}) = G(\boldsymbol{c}_{7\cdot 17})$. 

We have that  $g_{1}(\boldsymbol{c}_{7\cdot 17})=6$, and it occurs in each $\boldsymbol{\omega}_0$, and that  $g_{2}(\boldsymbol{c}_{7\cdot 17}) = 3$, and it occurs in $\boldsymbol{\omega}_1$. 
 Therefore,  $\boldsymbol{a}_{7\cdot 17}$ has $4=2\lfloor 17/7\rfloor$  gaps of each type.
 We observe that the boldface $1$, corresponding to  $\boldsymbol{\omega}_2^{3/7}[0]$, is the \emph{middle letter} or \emph{middle coefficient}.
 
 This example illustrates a particular case of Theorem \ref{thm:second gap}.
 }
\end{example}

\section{The second gap of \texorpdfstring{$\Phi_{pq}$ for $q \not\equiv \pm 1
\!\!\!\mod \!p$}{} } \label{section:the second gap}

 Our main result is Theorem~\ref{thm:second gap}, which determines the lengths and locations of  the first and second gaps of $\boldsymbol{a}_{pq}$, with no restrictions on $q \!\!\!\mod \! p$.
\begin{theorem}\label{thm:second gap} 
  Let $p<q$ be odd prime numbers, let $r = q \!\!\!\mod \! p$, and let  $m = \varphi(pq)$. 
 \begin{enumerate}
  \item If $p\geq 3$, then $ g_{1}(\boldsymbol{a}_{pq}) = p-1$. There are  $\lfloor q/p\rfloor $ such gaps  located  in     $\boldsymbol{a}_{pq}[0:q-1]$ and $\lfloor q/p\rfloor$ such gaps located in 
  $\boldsymbol{a}_{pq}[m-q+1:m]$.

  \medskip
  
  \item If $p\geq 5$ and $r \in \llbracket 2,p-2\rrbracket$, then   $g_{2}(\boldsymbol{a}_{pq}) = \max\{r-1,p-r-1\}$. There are at least $\lfloor q/p\rfloor $ such gaps located in $\boldsymbol{a}_{pq}[q: 2q-1]$ and at least $\lfloor q/p\rfloor $ such gaps located in  
  $\boldsymbol{a}_{pq}[m-2q-1: m-q]$.
 \end{enumerate}
\end{theorem}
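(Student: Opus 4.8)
=== BEGIN PROOF PROPOSAL ===

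\textbf{Overall approach.} The plan is to derive everything from the representation Theorem~\ref{thm:complete characterization of binary cyclotomic polynomials}, which writes $\boldsymbol{a}_{pq}$ as the concatenation of the fractional powers $\boldsymbol{\omega}_0^{q/p}\boldsymbol{\omega}_1^{q/p}\cdots\boldsymbol{\omega}_{p-3}^{q/p}\boldsymbol{\omega}_{p-2}^{(q-p+2)/p}$. Each block $\boldsymbol{\omega}_i^{q/p}$ has length $q$ and occupies a known window of $\boldsymbol{a}_{pq}$: the block indexed by $i$ lives in positions $\boldsymbol{a}_{pq}[iq:(i+1)q-1]$ for $0\le i\le p-3$. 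Since the gaps of a word are read off from its maximal runs of zeros, the whole theorem reduces to understanding the gap structure \emph{inside} a single periodic block $\boldsymbol{\omega}_i^{q/p}$ and then controlling what happens at the \emph{seams} where two consecutive blocks $\boldsymbol{\omega}_i^{q/p}\boldsymbol{\omega}_{i+1}^{q/p}$ meet. I expect the analysis of the first two blocks ($i=0$ and $i=1$) together with their symmetric counterparts near the end of the word to carry the entire statement, because the maximum gaps come from $\boldsymbol{\omega}_0=\boldsymbol{d}_0=1(-1)0^{p-2}$ and the second-largest gaps from $\boldsymbol{\omega}_1$.

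\textbf{Part (1): the maximum gap.} First I would record that $g_1(\boldsymbol{a}_{pq})=p-1$ is already known from the literature (\cite{Hong12}, etc.), so the new content is the \emph{count and location}. I would observe that $\boldsymbol{\omega}_0=1(-1)0^{p-2}$, so each of its copies contributes one gapblock $0^{p-2}$, i.e.\ a gap of length $p-1$. The fractional power $\boldsymbol{\omega}_0^{q/p}=\boldsymbol{\omega}_0^{\lfloor q/p\rfloor}\boldsymbol{\omega}_0^{r/p}$ therefore contains exactly $\lfloor q/p\rfloor$ gapblocks of length $p-2$ coming from the $\lfloor q/p\rfloor$ full copies (the trailing fractional piece $\boldsymbol{\omega}_0^{r/p}=1(-1)0^{r-2}$ or a prefix thereof contributes a shorter terminal run). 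The key point to verify, using the properties of the basic words established in Section~\ref{section:properties of omegai}, is that no longer run of zeros can be created at a seam: I would show $\boldsymbol{\omega}_1$ begins with a nonzero letter (indeed $\boldsymbol{\omega}_1[0]=1$), so the concatenation $\boldsymbol{\omega}_0^{q/p}\boldsymbol{\omega}_1^{q/p}$ does not merge the final zero-run of the first block with the initial letters of the second. This pins the $\lfloor q/p\rfloor$ maximum gaps inside $\boldsymbol{a}_{pq}[0:q-1]$. The $\lfloor q/p\rfloor$ gaps in $\boldsymbol{a}_{pq}[m-q+1:m]$ then follow immediately from the \emph{reciprocity} of $\Phi_{pq}$: since $\boldsymbol{a}_{pq}$ reads the same forwards and backwards, the gap structure of the final length-$q$ window mirrors that of the first, giving another $\lfloor q/p\rfloor$ gaps of length $p-1$. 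Finally I would argue that every other block $\boldsymbol{\omega}_i$ with $2\le i\le p-2$ has all its interior zero-runs strictly shorter than $p-2$, so it contributes no maximum gaps; this is where the structural lemmas on $\boldsymbol{\omega}_i$ from Section~\ref{section:properties of omegai} do the real work.

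\textbf{Part (2): the second gap.} The strategy is identical but one block further in. I would identify the longest zero-run occurring in $\boldsymbol{\omega}_1$ and show its length is $\max\{r-1,p-r-1\}-1$, so that $\boldsymbol{\omega}_1$ contributes gaps of length exactly $\max\{r-1,p-r-1\}$. Concretely, $\boldsymbol{\omega}_1=\boldsymbol{\omega}_0+\boldsymbol{d}_1=\boldsymbol{d}_0+\sigma^q(\boldsymbol{d}_0)$, and since $\boldsymbol{d}_1=\sigma^{q}(1(-1)0^{p-2})=\sigma^{r}(1(-1)0^{p-2})$ places its nonzero pair $1(-1)$ at positions $(r-1)\bmod p$ and $r\bmod p$, summing with $\boldsymbol{d}_0$ yields a word whose nonzero letters sit at positions $\{0,1\}$ and $\{r-1,r\}$ (modulo the boundary cases). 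The two zero-gaps between these nonzero clusters have lengths $r-2$ and $p-r-2$ in terms of zero-run length, i.e.\ gaps $r-1$ and $p-r-1$; the longer one is $\max\{r-1,p-r-1\}$, which is the claimed value. I would then count: each of the $\lfloor q/p\rfloor$ full copies of $\boldsymbol{\omega}_1$ inside $\boldsymbol{a}_{pq}[q:2q-1]$ contributes one such gap, giving at least $\lfloor q/p\rfloor$ second gaps in that window, and reciprocity supplies at least $\lfloor q/p\rfloor$ more in the symmetric window $\boldsymbol{a}_{pq}[m-2q-1:m-q]$. To conclude that no gap strictly between $\max\{r-1,p-r-1\}$ and $p-1$ occurs, I must verify that \emph{every} other block $\boldsymbol{\omega}_i$, $i\ge 2$, as well as the $i=0$ block after its maximum gaps are removed, has all remaining zero-runs of length at most $\max\{r-1,p-r-1\}-1$.

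\textbf{Main obstacle.} The hard part will be the uniform control of the zero-runs in \emph{all} intermediate basic words $\boldsymbol{\omega}_2,\ldots,\boldsymbol{\omega}_{p-2}$ and at every seam, rather than in the two special blocks. Establishing the exact value and the count for $\boldsymbol{\omega}_0$ and $\boldsymbol{\omega}_1$ is a short direct computation, but proving that these are genuinely the two largest gaps of the whole word requires showing that no other basic word (and no junction between blocks) produces a zero-run of competing length. I expect this to rest on the precise description of the nonzero-letter positions of $\boldsymbol{\omega}_i$ developed in Section~\ref{section:properties of omegai}: one needs that as $i$ grows the nonzero letters become progressively denser (the ``$+\boldsymbol{d}_i$'' steps keep inserting a $\pm1$ pair at a shifting location), so that interior gaps shrink below $\max\{r-1,p-r-1\}$ for $i\ge 2$. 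Handling the seam between $\boldsymbol{\omega}_{i}^{q/p}$ and $\boldsymbol{\omega}_{i+1}^{q/p}$, where a trailing zero-run of the fractional tail could in principle fuse with a leading zero-run of the next block, is the delicate bookkeeping step; I would dispatch it by showing each $\boldsymbol{\omega}_i$ starts with a nonzero letter for $i\le p-2$, which prevents any such merge and keeps every gap confined to a single block.

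=== END PROOF PROPOSAL ===
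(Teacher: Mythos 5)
Your skeleton matches the paper's: decompose $\boldsymbol{a}_{pq}$ into the length-$q$ blocks $\boldsymbol{\omega}_i^{q/p}$, read the maximum gaps off $\boldsymbol{\omega}_0=1(-1)0^{p-2}$ and the second gaps off $\boldsymbol{\omega}_1=1(-1)0^{p-r-2}1(-1)0^{r-2}$, use reciprocity for the mirror windows, and then bound everything else. But the step you defer as ``where the structural lemmas do the real work'' is the entire difficulty, and the device you propose for dispatching the seams is false. You claim that each $\boldsymbol{\omega}_i$ with $i\le p-2$ begins with a nonzero letter, so that no zero-run can straddle a junction. In fact $\boldsymbol{\omega}_i[0]=0$ for all $i\ge u_{+}$ (where $u_{+}q\equiv 1 \bmod p$); see the first two rows of Table~\ref{table:values of omegai}, or the paper's own example where $\boldsymbol{\omega}_5=000(-1)100$. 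Moreover, even when the next block does start with a nonzero letter, the block $\boldsymbol{\omega}_i^{q/p}$ ends with the \emph{truncation} $\boldsymbol{\omega}_i^{r/p}$, whose trailing zero-run depends on the position of the last nonzero letter of $\boldsymbol{\omega}_i[0:r-1]$ --- a quantity you never control. The paper's Lemmas~\ref{lemma:alternate signs}--\ref{lemma:alternate signs in omegai} (the partial-sum characterization ${\sf S}=\boldsymbol{1}_A-\boldsymbol{1}_B$ and the resulting sign alternation) exist precisely to pin down $\fin{\boldsymbol{\omega}_i^{r/p}}$ and $\com{\boldsymbol{\omega}_{i+1}}$, and a separate argument (Lemma~\ref{lemma:hasta p-3/2}, second case) is needed at the transition $i=u_{+}-1\to i=u_{+}$, where the seam contributes a gap of length exactly $2$. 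Without some substitute for this machinery your bound on the seams, and hence on $g_2$, does not close.

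Two further points. Your heuristic that the nonzero letters of $\boldsymbol{\omega}_i$ ``become progressively denser'' as $i$ grows is also wrong --- adding $\boldsymbol{d}_i$ can cancel existing letters, and indeed $\boldsymbol{\omega}_{p-1}$ is the zero word --- so the interior zero-runs of $\boldsymbol{\omega}_i$ for $i\ge 2$ must be bounded by the same alternation analysis (Lemma~\ref{lemma:gaps control} and Proposition~\ref{prop:gap of words wi}), not by a density argument. Finally, a computational slip: since $\sigma$ is the \emph{left} shift, $\boldsymbol{d}_1$ has its pair $1(-1)$ at positions $p-r$ and $p-r+1$, not at $r-1$ and $r$; with your positions the two zero-runs would have lengths $r-3$ and $p-r-1$, contradicting the gap lengths $r-1$ and $p-r-1$ you then assert. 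The final answer you state is correct, but only because it was worked backward from the known result rather than forward from your computation.
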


In this section we deal with prime numbers   $5\le p < q$ and  $r  \in \llbracket 2,  p-2 \rrbracket$.  The cases $r = 1$ and $p-1$ (which in particular includes $p =3$) are treated separately in  Section~\ref{section:gaps and a representation theorem}.
 
 Our starting point is the representation of $\boldsymbol{a}_{pq}$ given in  Theorem~\ref{thm:representation theorem}. We study its gaps in three steps. Section~\ref{sec:gapsbasicwords} describes the gaps of the basic words $\boldsymbol{\omega}_{i}$. As illustrated in Example~ \ref{example:first example},  the gapset of $\boldsymbol{a}_{pq}$ coincides with that of its first half (Lemma~\ref{lemma:gap de a igual a gap c}). We then analyze new gaps arising from fractional powers and concatenation. An upper bound for gaps under concatenation is given in Section~\ref{sec:concatenationandgaps}. Finally, in Section~\ref{subsec:completing proof of theorem 1}, we combine this bound with the gaps from $\boldsymbol{\omega}_0^{q/p}$ and $\boldsymbol{\omega}_1^{q/p}$ to complete the proof.

\subsection{Gaps of basic words} \label{sec:gapsbasicwords}
 First, in Lemma~\ref{lemma:gap de a igual a gap c}, we show that since $\boldsymbol{a}_{pq}$ is reciprocal (it is equal to the word obtained by reversing its letters), it suffices to consider the basic words $\boldsymbol{\omega}_i$ for $i \in \llbracket 0, (p-3)/2 \rrbracket$ instead of $i \in \llbracket 0, p-2 \rrbracket$.

\begin{lemma}\label{lemma:gap de a igual a gap c}
Let $m=(p-1)(q-1)$, and let $\boldsymbol{c}_{pq} \in \mathcal{A}^{m/2}$ be the word
\[
\boldsymbol{c}_{pq}
=
\boldsymbol{\omega}_{0}^{q/p}
\boldsymbol{\omega}_{1}^{q/p}
\cdots
\boldsymbol{\omega}_{\frac{p-5}{2}}^{q/p}\,
\boldsymbol{\omega}_{\frac{p-3}{2}}^{\left(q-\frac{p-1}{2}\right)/p}.
\]
Then the gapset $G(\boldsymbol{a}_{pq})$ coincides with the gapset $G(\boldsymbol{c}_{pq})$.
\end{lemma}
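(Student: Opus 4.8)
The plan is to exploit the reciprocity of $\Phi_{pq}$, that is, the identity $\boldsymbol{a}_{pq}=\widetilde{\boldsymbol{a}_{pq}}$, which says that $\boldsymbol{a}_{pq}$ is a palindrome: $\boldsymbol{a}_{pq}[j]=\boldsymbol{a}_{pq}[m-j]$ for every $j\in\llbracket 0,m\rrbracket$. First I would verify, by a length count, that $\boldsymbol{c}_{pq}$ is exactly the prefix $\boldsymbol{a}_{pq}[0:m/2-1]$. Indeed, by Theorem~\ref{thm:complete characterization of binary cyclotomic polynomials} the block $\boldsymbol{\omega}_{(p-3)/2}^{q/p}$ occurs in full inside $\boldsymbol{a}_{pq}$ (since $(p-3)/2<p-3$ for $p\ge 5$), and $\boldsymbol{\omega}_{(p-3)/2}^{(q-(p-1)/2)/p}$ is a prefix of it because $q-(p-1)/2<q$; moreover a direct computation gives $|\boldsymbol{c}_{pq}|=\frac{p-3}{2}q+\left(q-\frac{p-1}{2}\right)=\frac{p-1}{2}(q-1)=m/2$. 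Writing $\mu=\boldsymbol{a}_{pq}[m/2]$ for the middle letter, the palindrome property then yields the factorization
\[
\boldsymbol{a}_{pq}=\boldsymbol{c}_{pq}\,\mu\,\widetilde{\boldsymbol{c}_{pq}},
\]
where $\mu$ is nonzero, being the middle coefficient of $\Phi_{pq}$.

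Next I would observe that reversing a word leaves the set of its gapblock lengths unchanged, so that $G(\widetilde{\boldsymbol{c}_{pq}})=G(\boldsymbol{c}_{pq})$. For the equality of gapsets I would analyze the block decomposition of $\boldsymbol{a}_{pq}=\boldsymbol{c}_{pq}\,\mu\,\widetilde{\boldsymbol{c}_{pq}}$ at the centre. Writing $\boldsymbol{c}_{pq}=\boldsymbol{c}_0 0^{k_1}\boldsymbol{c}_1\cdots 0^{k_t}\boldsymbol{c}_t$ in the canonical form of~\eqref{eq:word decomposition}, the factor of $\boldsymbol{a}_{pq}$ straddling the middle is $\boldsymbol{c}_t\,\mu\,\widetilde{\boldsymbol{c}_t}$, and because $\mu\ne 0$ this is either a single block of nonzero letters (when $\boldsymbol{c}_t\ne\epsilon$) or of the form $0^{k_t}\mu\,0^{k_t}$ (when $\boldsymbol{c}_t=\epsilon$). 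In both situations no gapblock crosses the centre, so every gapblock of $\boldsymbol{a}_{pq}$ is a gapblock of $\boldsymbol{c}_{pq}$ or of $\widetilde{\boldsymbol{c}_{pq}}$ of the same length, and conversely. Hence $G(\boldsymbol{a}_{pq})=G(\boldsymbol{c}_{pq})\cup G(\widetilde{\boldsymbol{c}_{pq}})=G(\boldsymbol{c}_{pq})$, which is the assertion.

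The only delicate point, and the step I would treat most carefully, is the behaviour at the centre when $\boldsymbol{c}_{pq}$ ends in a run of zeros. There the two mirrored trailing zero-blocks $0^{k_t}$ lie on either side of $\mu$, and it is precisely the nonvanishing of the middle coefficient that prevents them from fusing into a single block $0^{2k_t+1}$, which would contribute a spurious gap $2k_t+2$ absent from $G(\boldsymbol{c}_{pq})$. Thus the cited fact that the middle coefficient of $\Phi_{pq}$ is nonzero is exactly what makes the reduction to the first half legitimate; the remaining part of the argument is the routine bookkeeping of block decompositions under concatenation and reversal.
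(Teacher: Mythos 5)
Your proposal is correct and follows essentially the same route as the paper: reciprocity of $\Phi_{pq}$ gives the palindrome factorization $\boldsymbol{a}_{pq}=\boldsymbol{c}_{pq}\,z_{pq}\,\widetilde{\boldsymbol{c}_{pq}}$, the length count $\frac{m}{2}=\frac{p-3}{2}q+q-\frac{p-1}{2}$ identifies $\boldsymbol{c}_{pq}$ with the first half, and the nonvanishing of the middle coefficient prevents any gapblock from straddling the centre. Your case analysis at the centre (whether or not $\boldsymbol{c}_{pq}$ ends in a run of zeros) is simply a more explicit rendering of the paper's final sentence.
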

 \begin{proof}
Since $\boldsymbol{a}_{pq}$ has odd length $m+1$ and each $\boldsymbol{\omega}_{i}^{q/p}$  has length $q$,  the length of  $\boldsymbol{c}_{pq}$ is
\[
 |\boldsymbol{c}_{pq}|   = \frac{p-3}{2}\, q + q - \frac{(p-1)}{2}=\frac{m}{2}. 
 \]
 If $z_{pq}$ denotes the middle letter of $\boldsymbol{a}_{pq}$, then $\boldsymbol{a}_{pq} = \boldsymbol{c}_{pq} z_{pq} {\boldsymbol{m}_{pq}}$, with ${\boldsymbol{m}_{pq}}$ of length $m/2$.
Since $\boldsymbol{a}_{pq}$ is reciprocal (see~\cite{Niven56} or~\cite{Cafure21}),   we have that
$\boldsymbol{m}_{pq}$ equals $ \widetilde{\boldsymbol{c}_{pq}}$. Hence $\boldsymbol{a}_{pq}=\boldsymbol{c}_{pq}z_{pq} \widetilde{\boldsymbol{c}_{pq}}$. 
Since $z_{pq} = \boldsymbol{a}_{pq}[m/2]$ is $1$ or $-1$ (see~\cite{LaLe99}), we conclude that $G(\boldsymbol{a}_{pq}) = G(\boldsymbol{c}_{pq})$.
\end{proof}

\subsubsection{Special values of the basic words}\label{section:properties of omegai}  The purpose of this section is to prove the  explicit formula for   $\boldsymbol{\omega}_i[j]$ given in Lemma~\ref{lemma:wieI}  and to compute the values of the basic words $\boldsymbol{\omega}_i$ at the positions $j=0,1,r-1,r,p-r,$ and $p-r+1$, which are  summarized in Table~\ref{table:values of omegai}.

Let $u_{+}, u_{-}\in\llbracket 1, p-1\rrbracket$ be the unique integers satisfying 
\[
  u_{+}q\equiv 1 \!\!\!\mod \!p\quad \text{ and } \quad u_{-}q\equiv p-1 \!\!\!\mod \!p.
 \]
 It follows that $u_{+} \in \llbracket 1,(p-1)/2\rrbracket$ if  and only if  $u_{-}\in \llbracket(p+1)/2,p-1\rrbracket$   since $u_{+}+u_{-}=p$. 

The next lemma  gives a formula for $\boldsymbol{\omega}_i[j]$ in terms of two maps ${\sf I}_{+}$ and ${\sf I}_{-}$ from $\llbracket 0, p-1 \rrbracket$ to $\llbracket 0, p-1 \rrbracket$:
\begin{equation}\label{eq:formula for I}
  {\sf I}_{+}(j) = -u_{+}j \!\!\! \mod \! p\quad \text{ and } \quad {\sf I}_{-}(j) = u_{-}(j -1)\!\!\! \mod \! p . 
 \end{equation}
The choice of notation ${\sf I}_{+}$ and ${\sf I}_{-}$ for these maps is motivated by the identities \eqref{eq:diI}.

\begin{lemma}\label{lemma:wieI} For  $i,j \in \llbracket 0, p-1 \rrbracket$, the following holds:
   \[
\boldsymbol{\omega}_i[j]=
\begin{cases} \phantom{-}1, & \text{if } \,  {\sf I}_{+}(j)\le i <{\sf I}_{-}(j)\\[1ex]
-1,&  \text{if } \, {\sf I}_{-}(j)\le i <{\sf I}_{+}(j)\\[1ex]
\phantom{-}0, & \text{otherwise}. 
\end{cases}
\]
\end{lemma}

\begin{proof}
Let $\{\boldsymbol{d}_{i}\}_{0\le i\le p-1}$ be the sequence of words in $\mathcal{A}^{p}$ defined in  \eqref{defi:definition of di}. 
Since $\boldsymbol{d}_{i} = \sigma^{iq}(\boldsymbol{d}_{0})$, we have that  $\boldsymbol{d}_{i}[j]=\boldsymbol{d}_{0}[j+iq \!\!\! \mod \! p]$. For  $i,j \in \llbracket 0, p-1 \rrbracket$, the following hold: 
\[\boldsymbol{d}_{i}[j]=1\  \iff \  j+iq\equiv 0 \!\!\!\mod \!p \quad \text{and}\quad
\boldsymbol{d}_{i}[j]=-1 \ \iff \  j+iq\equiv 1  \!\!\!\mod \!p.
\]
 Hence, for $i\in \llbracket 0,p-1\rrbracket$,
 \begin{align}
\label{eq:diI} 
\begin{split}
\boldsymbol{d}_i[j]&=1 \quad \phantom{-}\text{if and only if }\quad\ i={\sf I}_{+}(j), 
\\[1ex] 
  \boldsymbol{d}_i[j]&=-1\quad \text{if and only if }\quad i={\sf I}_{-}(j).
\end{split}
\end{align}
For any other $i,j$, we have $\boldsymbol{d}_i[j]=0$.

Now we turn to the basic words. From \eqref{eq:definition of omega_i}, we have that for each $(i,j) \in \llbracket 0, p-1\rrbracket^{2}$, 
\begin{equation}\label{eq:omegai suma}
\boldsymbol{\omega}_{i}[j]=\sum_{k=0}^i \boldsymbol{d}_{k}[j].
\end{equation}
Since ${\sf I}_{+}$ and ${\sf I}_{-}$ are bijective, at most two values of $\boldsymbol{d}_k[j]$ can be different from zero: $k={\sf I}_{+}(j)$ or $k={\sf I}_{-}(j)$. Considering the relative ordering of $i$,  ${\sf I}_{+}(j)$, and ${\sf I}_{-}(j)$, the result follows. 
\end{proof}

In Table~\ref{table:values of I}, we collect some relevant values of ${\sf I}_{+}$ and ${\sf I}_{-}$. Combining these results with the information provided by Lemma~\ref{lemma:wieI} about the letters $\omega_{i}[j]$, Example~\ref{example:calculo de omegau+r-1} below shows how to compute some of the values displayed in Table~\ref{table:values of omegai}.

\begin{table}[H]
\setlength{\tabcolsep}{4pt}
\centering
 \caption{Values of ${\sf I}_{+}$ and ${\sf I}_{-}$}
 \label{table:values of I}

 \begin{tabular}{|c|c|c|c|c|c|c|c|c|c|}
 \hline
               & $0$   & $1$      &$r-1$            & $r$           &$p-r$             & $p-r+1$   &$p-1$
\\ \hline  
  ${\sf I}_{+}$ & $0$  & ${u_{-}}$ &$u_{+}-1 $  & $p-1$         &$1$               &  $1+u_{-}\!\!\!\mod \!p$ & $u_{+}$
  \\
  ${\sf I}_{-}$ & $u_{+}$ & $0$      & $2u_{+}-1 \!\!\!\mod \!p$&$u_{+}-1$& $ 1+u_{+}\!\!\!\mod \!p$&   $1$      & $2u_{+}\!\!\!\mod \!p$   
  \\ \hline    
 \end{tabular}
 \end{table}

\begin{example}\label{example:calculo de omegau+r-1}{\rm We show how to obtain the information stated in the first, second, and fourth rows of the Table~\ref{table:values of omegai}.

\noindent \emph{First and second rows.} Table~\ref{table:values of I} shows that  ${\sf I}_{+}(0)= 0$ and ${\sf I}_{-}(0)=u_{+}$. Therefore we deduce that 
\[
 \boldsymbol{\omega}_{i}[0] = 
 \begin{cases}
  1, & \text{if } \, 0\leq i < u_{+}
  \\[1ex]
  0, & \text{if } \, u_{+}\leq i \leq p-1.
 \end{cases}
\]
\noindent \emph{Fourth row.} According to Table~\ref{table:values of I}, 
\[{\sf I}_{+}(r-1)=u_{+}-1 \quad \text{and}\quad {\sf I}_{-}(r-1)=2u_{+}-1 \!\!\!\mod \!p.
\]
Since $u_{+}\le (p-1)/2$, it follows that ${\sf I}_{-}(r-1)=2u_{+}-1 \le p-2$. Thus, ${\sf I}_{+}(r-1)\leq u_{+} -1 < {\sf I}_{-}(r-1)$ and the result follows from the first line of the formula given in Lemma~\ref{lemma:wieI}. }
\end{example} 

\vspace{-0.5cm}
\begin{table}[ht]
\centering
\caption{The first column lists  $\boldsymbol{\omega}_i[j]$ for special values of $j$. The second column gives the corresponding conditions on $i$, and the third column indicates extra conditions on $p$, $u_{+}$, and $r$ if applicable.}
 
 \begin{tabular}{|rcl|c|c|} 
 \hline
 $ \boldsymbol{\omega}_i[0]$&$\!\!\!=$&$\!\!\!1$      & for $0\le i <u_{+}$       &    \\  \hline 
 $\boldsymbol{\omega}_i[0]$&$\!\!\!=$&$\!\!\!0$      & {for} $u_{+}\le i \le p-1$  &    \\  \hline  $\boldsymbol{\omega}_i[1]$&$\!\!\!=$&$\!\!\!-1$ &     {for} $0\le i <u_{-}$     &    \\  \hline
 $\boldsymbol{\omega}_{u_{+}-1}[r-1]$&$\!\!\!=$&$\!\!\! 1$ &                                & $u_{+}\le (p-1)/2$   \\  \hline
$\boldsymbol{\omega}_i[r]$&$\!\!\!=$&$\!\!\!-1$      &  for  $u_{+}-1\le i < p-1$       &                     \\  \hline
 $\boldsymbol{\omega}_i[p-r]$&$\!\!\!=$&$\!\!\!1$      &   {for} $1\le i < 1 + u_{+}$         &   $r\neq p-1$, thus $u_{+}\neq p-1$.                 \\  \hline
 $\boldsymbol{\omega}_i[p-r+1]$&$\!\!\!=$&$\!\!\!-1$      &   {for} $1\le i <  1 + u_{-}$  &       $r\ge 2$               \\  \hline
 \end{tabular}
\medskip
 \label{table:values of omegai}
 \end{table}
\vspace{-0.4cm}

\subsubsection{Alternation and gaps}
From Table~\ref{table:values of omegai} we obtain the bound 
$g_{1}(\boldsymbol{\omega}_i)\le \max\{r,p-r\}$ for $1\le i<\max\{u_+,u_{-}\}$.
The goal of this section is to prove Lemma~\ref{lemma:gaps control}, which yields the sharper bound in Proposition~\ref{prop:gap of words wi}.
To this end, in Lemma~\ref{lemma:alternate signs} we establish a criterion for words over $\mathcal{A}=\{-1,0,1\}$ to have alternating $1$s and $-1$s, and to compute their last nonzero letter.  We then show, mainly via Lemma~\ref{lemma:wi satisface el criterio}, that each $\boldsymbol{\omega}_{i}$ satisfies this criterion.  
We do not rely on  the known alternation of the nonzero coefficients $1$s and $-1$s of binary cyclotomic polynomials (see e.g. ~\cite{Migotti83},~\cite{Lenstra79},~\cite{LaLe99}). Sign alternation has previously been used to prove  that $g_{1}(\Phi_{pq}) = p-1$ in~\cite{Camburu16}.

\begin{lemma}\label{lemma:alternate signs}
 For
  $\boldsymbol{u}\in \mathcal{A}^{+}$, let $S_{\boldsymbol{u}}:\llbracket 0,|\boldsymbol{u}|-1\rrbracket\to \mathbb{Z}$  be  the mapping  defined by $ S_{\boldsymbol{u}}(j)=\sum_{t=0}^j \boldsymbol{u}[t]$.     
 Let $V(S_{\boldsymbol{u}}) = \{S_{\boldsymbol{u}}(j): j\in  \llbracket 0,|\boldsymbol{u}|-1\rrbracket \}$ denote its value set. 
\begin{enumerate}

\smallskip

 \item \label{lemma:alternate signs ii}If $S_{\boldsymbol{u}}(|\boldsymbol{u}|-1)=0$ and $V(S_{\boldsymbol{u}})=\{0,1\}$, then $1$ and $-1$ alternate in $\boldsymbol{u}$ and  the last nonzero letter of $\boldsymbol{u}$ is $-1$.
 
 \item \label{lemma:alternate signs iii}If $S_{\boldsymbol{u}}(|\boldsymbol{u}|-1)=0$ and $V(S_{\boldsymbol{u}})=\{-1,0\}$, then $1$ and $-1$ alternate in $\boldsymbol{u}$ and  the last nonzero letter of $\boldsymbol{u}$ is $1$.
\end{enumerate}
\end{lemma}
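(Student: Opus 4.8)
The plan is to prove assertion~\ref{lemma:alternate signs ii} directly and then deduce assertion~\ref{lemma:alternate signs iii} by negating every letter. Throughout I write $n=|\boldsymbol{u}|$ and use that, by the very definition of $S_{\boldsymbol{u}}$, we have $S_{\boldsymbol{u}}(0)=\boldsymbol{u}[0]$ and $S_{\boldsymbol{u}}(j)-S_{\boldsymbol{u}}(j-1)=\boldsymbol{u}[j]$ for $1\le j\le n-1$; thus two consecutive partial sums differ by a single letter, which lies in $\{-1,0,1\}$. The first step is to record the transition rules forced by the hypothesis $V(S_{\boldsymbol{u}})=\{0,1\}$: since every partial sum equals $0$ or $1$, a letter $0$ leaves the running sum unchanged, a letter $+1$ can occur only when the sum jumps from $0$ to $1$, and a letter $-1$ can occur only when it drops from $1$ to $0$. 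In particular the partial sum is constant along any maximal block of zeros.

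Next I would list the positions $j_1<j_2<\cdots<j_k$ of the nonzero letters of $\boldsymbol{u}$; since $\boldsymbol{u}$ is nonzero, $k\ge 1$. Because the sum over the (possibly empty) initial block of zeros is $0$, the transition rules force $\boldsymbol{u}[j_1]=+1$ and $S_{\boldsymbol{u}}(j_1)=1$. I would then argue by induction on $\ell$: between $j_\ell$ and $j_{\ell+1}$ all letters vanish, so $S_{\boldsymbol{u}}(j_{\ell+1}-1)=S_{\boldsymbol{u}}(j_\ell)$, and the single nonzero transition at $j_{\ell+1}$ is forced to reverse the current value. Concretely, $S_{\boldsymbol{u}}(j_\ell)=1$ forces $\boldsymbol{u}[j_{\ell+1}]=-1$ and $S_{\boldsymbol{u}}(j_{\ell+1})=0$, whereas $S_{\boldsymbol{u}}(j_\ell)=0$ forces $\boldsymbol{u}[j_{\ell+1}]=+1$ and $S_{\boldsymbol{u}}(j_{\ell+1})=1$. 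This yields $\boldsymbol{u}[j_\ell]=(-1)^{\ell-1}$ for every $\ell$, which is exactly the alternation of $+1$ and $-1$ among the nonzero letters.

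Finally, to identify the last nonzero letter I would invoke the hypothesis $S_{\boldsymbol{u}}(n-1)=0$. Since the partial sum is constant after $j_k$, we get $S_{\boldsymbol{u}}(j_k)=S_{\boldsymbol{u}}(n-1)=0$, while the induction gives $S_{\boldsymbol{u}}(j_k)=1$ when $k$ is odd; hence $k$ is even and $\boldsymbol{u}[j_k]=(-1)^{k-1}=-1$, as claimed. For assertion~\ref{lemma:alternate signs iii} I would apply assertion~\ref{lemma:alternate signs ii} to the word $-\boldsymbol{u}$ obtained by negating each letter: then $S_{-\boldsymbol{u}}=-S_{\boldsymbol{u}}$, so $V(S_{-\boldsymbol{u}})=\{0,1\}$ and $S_{-\boldsymbol{u}}(n-1)=0$; the alternation in $-\boldsymbol{u}$ coincides with that in $\boldsymbol{u}$, and its last nonzero letter being $-1$ means that the last nonzero letter of $\boldsymbol{u}$ is $+1$. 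The only delicate point, and the one I would be most careful about, is the bookkeeping at the very first nonzero position — in particular the convention that the sum over the empty prefix equals $0$, which anchors the base case of the induction; everything else is a mechanical consequence of the two-valued range of $S_{\boldsymbol{u}}$.
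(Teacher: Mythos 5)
Your proof is correct and follows essentially the same route as the paper's: read off from $V(S_{\boldsymbol{u}})=\{0,1\}$ that the first nonzero letter is $+1$ and that signs must alternate between consecutive nonzero positions, then use $S_{\boldsymbol{u}}(|\boldsymbol{u}|-1)=0$ to pin down the last nonzero letter. Your deduction of assertion (2) by negating the word is just a tidier phrasing of the paper's ``follows similarly.''
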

 
\begin{proof}
    We only prove $ (1)$, since the proof of $ (2)$ proceeds similarly. From $V(S_{\boldsymbol{u}})=\{0, 1\}$, we deduce that the first nonzero letter of $\boldsymbol{u}$ is $1$ and that there are no two consecutive $1$s. The fact that $S_{\boldsymbol{u}}(|\boldsymbol{u}|-1)=0$ implies that the second nonzero letter is $-1$. An inductive argument shows the validity of the assertion.
\end{proof}

The next lemma shows that the words $\boldsymbol{\omega}_i$ and $\boldsymbol{\omega}_i^{r/p}$ satisfy the assumptions on the value sets required by Lemma~\ref{lemma:alternate signs}.
For $0\le i\le p-2$, we consider the   mappings     
\begin{align*}
S_{\boldsymbol{\omega}_i}(j) & = \sum_{t=0}^{j} \boldsymbol{\omega}_{i}[t],\quad \text{for}\ j\in\llbracket 0,p-1\rrbracket,
\\
S_{\boldsymbol{\omega}_i^{r/p}}(j) & =\sum_{t=0}^{j} \boldsymbol{\omega}_{i}^{r/p}[t]=S_{\boldsymbol{\omega}_i}(j),\quad \text{for}\ j\in\llbracket 0,r-1\rrbracket.
\end{align*}

\begin{lemma}\label{lemma:wi satisface el criterio}  The following holds:
\begin{enumerate}
 \item \label{lemma:wi satisface el criterio 3}  $V(S_{\boldsymbol{\omega}_i})=\{0,1\}$ for $0\le i<u_{+}$.
\item \label{lemma:wi satisface el criterio 4}   $V(S_{\boldsymbol{\omega}_i})=\{-1,0\}$ for $u_{+}\le i \le p-2$. 
\item \label{lemma:wi satisface el criterio 5} If $u_{+}\ge 2$, then   $V\left(S_{\boldsymbol{\omega}_i^{r/p}}\right)=\{0,1\}$ for $0\le i\le u_{+}-2$.
\item \label{lemma:wi satisface el criterio 6} If  $u_{+}<u_{-}$, then $V\left(S_{\boldsymbol{\omega}_i^{r/p}}\right)=\{-1,0\}$ for $u_{+}\le i < u_{-}$.
\end{enumerate}

\end{lemma}
\begin{proof}
We first prove the next expression for $S_{\boldsymbol{\omega}_{i}}(j)$  using indicator functions:
\begin{equation}\label{lemma:map S como diferencia de indicadoras}
 S_{\boldsymbol{\omega}_{i}}(j) =\boldsymbol{1}_A(i,j)-\boldsymbol{1}_B(i,j)\quad \text{for}\ 0\le i,j\le p-1,
\end{equation}
where \
\[A  =\left\{(i,j)\in \llbracket 0, p-1\rrbracket^2 : {\sf I}_{+}(j)\le i\right\}\quad \text{ and } \quad
 B =\left\{(i,j)\in  \llbracket 0, p-1\rrbracket^2 : {u}_{+} \le i\right\}.\] 
Recalling \eqref{eq:omegai suma} we obtain 
\begin{equation} \label{eq:Sij en funcion de Somega}
S_{\boldsymbol{\omega}_{i}}(j)= \sum_{t=0}^{j} \boldsymbol{\omega}_{i}[t]=\sum_{t=0}^{j}  \sum_{k=0}^{i}\boldsymbol{d}_{k}[t]=\sum_{k=0}^{i}  \sum_{t=0}^{j}\boldsymbol{d}_{k}[t]=\sum_{k=0}^{i}S_{\boldsymbol{d}_{k}}(j).
\end{equation}

First we consider the case $k= u_{+}$. Table~\ref{table:values of I} shows that ${\sf I}_{-}(0)= {\sf I}_{+}(p-1)=u_{+}$. Thus  $\boldsymbol{d}_{u_{+}}=(-1)0^{p-2}1$ and hence  $S_{\boldsymbol{d}_{u_{+}}}(j)$ is the negative of an indicator function:
\[
S_{\boldsymbol{d}_{u_{+}}}(j)=  
\begin{cases}
-1, & \text{if}\, j\in \llbracket 0,p-2\rrbracket
\\[1ex]
\phantom{-} 0, & \text{if}\, j = p-1.
\end{cases} = -\boldsymbol{1}_{\{0\le j\le p-2\} }(j).
\]

Next we consider $k \neq  u_{+}$. Since ${\sf I}_{+}$ is bijective and ${\sf I}_{+}(p-1)=u_{+}$, there exists an unique $j\in \llbracket 0,p-2\rrbracket$ such that $k={\sf I}_{+}(j)$. In this way,  $\boldsymbol{d}_{k}[j]=1$, $\boldsymbol{d}_{k}[j+1]=-1$ and $\boldsymbol{d}_{k}[t]=0$ for $t\neq j $ or $j+1$. These facts imply that  $S_{\boldsymbol{d}_{k}}(j)$ is an indicator function for $k\neq u_{+}$:
\[
S_{\boldsymbol{d}_{k}}(j)=  \begin{cases}
1, &  \text{if }\, k = {\sf I}_{+}(j)
\\[1ex]
0, & \text{if }\, \text{otherwise}.
\end{cases} \, = \, \boldsymbol{1}_{\{(k,j):k={\sf I}_+(j)\}}(k,j).
\]
Next,  we compute $S_{\boldsymbol{\omega}_i}(j)$ as follows:
 \begin{align*}S_{\boldsymbol{\omega}_{i}}(j)=\sum_{k=0}^i S_{\boldsymbol{d}_{k}}(j)&=\sum_{\substack{0\le k\le i\\[0.5ex]k\neq u_+}}S_{\boldsymbol{d}_{k}}(j)+{  S}_{\boldsymbol{d}_{u_{+}}}(j)\boldsymbol{1}_{\{u_+\le i\}}(i)\\
 &=\sum_{\substack{0\le k\le i\\[0.5ex]k\neq u_+}}\boldsymbol{1}_{\{(k,j):k={\sf I}_+(j)\}}(k,j) -\boldsymbol{1}_{\{0\le j\le p-2\} }(j)\boldsymbol{1}_{\{u_+\le i\}}(i) \\
 &=\boldsymbol{1}_{\{(i,j):{\sf I}_+(j)\le i,\ {\sf I}_+(j)\neq u_+\}}(i,j) -\boldsymbol{1}_{\{0\le j\le p-2\}  }(j)\boldsymbol{1}_{\{u_+\le i\}}(i) .
 \intertext{Since ${\sf I}_+(p-1)=u_+$, we obtain}
 S_{\boldsymbol{\omega}_{i}}(j)&=\boldsymbol{1}_{\{(i,j):{\sf I}_+(j)\le i\}}(i,j) -\boldsymbol{1}_{\{0\le j\le p-1\}  }(j)\boldsymbol{1}_{\{u_+\le i\}}(i)
 \\[1ex] &=\boldsymbol{1}_A(i,j)-\boldsymbol{1}_B(i,j).
 \end{align*} 
This completes the proof of \eqref{lemma:map S como diferencia de indicadoras}.

\smallskip
\noindent \emph{Proof of (\ref{lemma:wi satisface el criterio 3})} and (\ref{lemma:wi satisface el criterio 4}). For $0\le i<u_+$ and $0\le j\le p-1$,  \eqref{lemma:map S como diferencia de indicadoras} implies that $S_{\boldsymbol{\omega}_i}(j)=\boldsymbol{1}_A(i,j)$ and thus $V(S_{\boldsymbol{\omega}_i})=\{0,1\}$.

For $u_{+}\le i \le p-2$ and $0\le j\le p-1$, \eqref{lemma:map S como diferencia de indicadoras} implies that $
S_{\boldsymbol{\omega}_i}(j)=\boldsymbol{1}_A(i,j)-1$ and thus $V(S_{\boldsymbol{\omega}_i})=\{-1,0\}$.

\smallskip
\noindent \emph{Proof of  (\ref{lemma:wi satisface el criterio 5}) and (\ref{lemma:wi satisface el criterio 6}).} Our conditions on $i$ imply that $i <\max\{u_{+},u_{-}\}$. Since $r\ge 2$, rows 1 and 3 of Table~\ref{table:values of omegai}  imply that each $\boldsymbol{\omega}_i^{r/p}$ is nonzero.  Table~\ref{table:values of I} shows that ${\sf I}_{+}(r-1)=u_{+}-1$. Thus,  if $i\le u_{+}-2$, then    $(i,r-1)\notin A\cup B$;  and if $i\ge u_{+}$, then   $(i,r-1)\in A\cap B$.  
     In any case,  \eqref{lemma:map S como diferencia de indicadoras} implies that 
\begin{equation}\label{eq:Swirp=-}S_{\boldsymbol{\omega}^{r/p}_i}(|\boldsymbol{\omega}^{r/p}_{i}|-1)=S_{\boldsymbol{\omega}_i}(r-1)=\boldsymbol{1}_{A}(i,r-1) - \boldsymbol{1}_{B}(i,r-1) = 0.
\end{equation}
 Thus, $0\in V(S_{\boldsymbol{\omega}_i^{r/p}}) \subseteq V(S_{\boldsymbol{\omega}_i })$
and since $\boldsymbol{\omega}_i^{r/p}$ is nonzero (see    rows 1 and 2 of Table~\ref{table:values of omegai}), we conclude that these two sets are equal. Thus, from what we proved in (\ref{lemma:wi satisface el criterio 3}) and 
(\ref{lemma:wi satisface el criterio 4}), we conclude that if  $0\le i \le u_{+}-2$, then $V(S_{\boldsymbol{\omega}_i^{r/p}})=\{0,1\}$;   and if $    u_{+}\le i < u_{-}$, then  $V(S_{\boldsymbol{\omega}_i^{r/p}})=\{-1,0\}$.
\end{proof}

Assertions (\ref{lemma:wi satisface el criterio 5}) and (\ref{lemma:wi satisface el criterio 6}) do not cover the case $i=u_+-1$.
 For example, if $p=7$ and  $q=17$, then $r=3$, $u_+=5$, and  $\boldsymbol{\omega}_4^{3/7}=100$. Thus $V(S_{\boldsymbol{\omega}_4^{3/7}}) = \{1\}$ (see Example~\ref{example:first example}). 

The next lemma gives information about the location of the nonzero letters within the basic words. It follows from Table~\ref{table:values of omegai}, Lemma~\ref{lemma:alternate signs}, and Lemma~\ref{lemma:wi satisface el criterio}.
  
\begin{lemma}\label{lemma:gaps control} Let $5 \leq p < q$ be prime numbers  such that $r\in \llbracket 2, p-2 \rrbracket$. 
\begin{enumerate}
\item \label{gaps i} If $1\le i <u_{+}$, then the following assertions hold. 

\begin{enumerate}
 \item \label{gaps ib} There exist  $\ell_{1} \in \llbracket 1, p-r-1 \rrbracket$ and $\ell_{2} \in \llbracket p-r+1, p-1 \rrbracket $ such that
$\boldsymbol{\omega}_i[\ell_1]= \boldsymbol{\omega}_i[\ell_2]=-1$.

\item \label{gaps ic} If $1\le i\le u_{+}-2$, there exists  $\ell_{3} \in \llbracket 1, r-1 \rrbracket$ such that
$\boldsymbol{\omega}_i[\ell_3]=-1$.
\end{enumerate}

\item \label{gaps ii} If $u_{+} \le i <u_{-}$, then $r\ge 3$, and the following assertions hold.

\begin{enumerate}    
 \item \label{gaps iib} There exist $\ell_{4} \in \llbracket 2, p-r \rrbracket$  and $\ell_{5} \in \llbracket p-r+2, p-1 \rrbracket$ such that  
$\boldsymbol{\omega}_i[\ell_4]= \boldsymbol{\omega}_i[\ell_5]=1$.

\item \label{gaps iic} There exists $\ell_{6}  \in \llbracket 2, r-1 \rrbracket$ such that 
$\boldsymbol{\omega}_i[\ell_6]=1$.
\end{enumerate}
\end{enumerate}
\end{lemma}
\begin{proof}
The proof follows by verifying that  each $\boldsymbol{\omega}_{i}$ satisfy the assumptions of  Lemma~\ref{lemma:alternate signs}.  

Since $S_{\boldsymbol{d}_{k}}(p-1)=0$, identity \eqref{eq:Sij en funcion de Somega} yields $S_{\boldsymbol{\omega}_{i}}(|\boldsymbol{\omega}_{i}|-1 )=0$. By Lemma~\ref{lemma:wi satisface el criterio},  $\boldsymbol{\omega}_1, \dots, \boldsymbol{\omega}_{p-2} $ are nonzero and satisfy the value-set condition. Thus, the assumptions of Lemma~\ref{lemma:alternate signs} hold.

For $0\le i<u_{+}$,     
  the letters $1$ and $-1$ alternate and   the first nonzero letter of $\boldsymbol{\omega}_{i}$ is $1$ and its last nonzero letter is  $-1$. Recalling that $\boldsymbol{\omega}_{i}[0] = \boldsymbol{\omega}_i[p-r] = 1$, we prove item (\ref{gaps ib}). 

Similarly, if $u_{+} \le i <u_{-}$, 
  the  letters $-1$ and $1$ alternate and   the first nonzero letter of $\boldsymbol{\omega}_{i}$ is $-1$ and its last nonzero letter is  $1$. Since $\boldsymbol{\omega}_{i}[1] = \boldsymbol{\omega}_i[p-r+1] = -1$, we prove item (\ref{gaps iib}). 

 Assertions (\ref{gaps ic}) and (\ref{gaps iic}) 
follows with the same reasoning for $i\neq u_{+}-1$. Recall that we have already proved that $S_{\boldsymbol{\omega}_i^{r/p}}(r-1)=0$  in \eqref{eq:Swirp=-}. 

The case $i=u_{+}-1$ follows from Table~\ref{table:values of omegai}, which shows that $\boldsymbol{\omega}_{u_{+}-1}[r-1] = 1$ when $u_{+} \leq (p-1)/2$. This last condition holds because $u_{+}<u_{-}$. 

\end{proof}

We now illustrate the situation described in Table~\ref{table:values of omegai} and   Lemma~\ref{lemma:gaps control}(\ref{gaps ib},  \ref{gaps iib}):

\begin{itemize}  
\item \noindent If $1\le i< u_{+}$, then $\boldsymbol{\omega}_i=\underbrace{1}_{0}\cdots \underbrace{(-1)}_{\ell_1}\cdots \underbrace{1}_{p-r}\cdots \underbrace{(-1)}_{\ell_2} \cdots$
 
 \smallskip
\item \noindent If $u_{+} \leq i < u_{-}$, then $\boldsymbol{\omega}_i=\underbrace{0}_{0}\underbrace{(-1)}_{1}\cdots \underbrace{1}_{\ell_{4}}\cdots \underbrace{(-1)}_{p-r+1}\cdots \underbrace{1}_{\ell_{5}} \cdots$
\end{itemize}

\begin{proposition}\label{prop:gap of words wi}
Let $5 \leq p < q$ be prime numbers and  $r\in \llbracket 2, p-2 \rrbracket$. The  following assertions hold. 
\begin{enumerate}
\item \label{prop:gap of words w01} $\boldsymbol{\omega}
_0=1(-1)0^{p-2}$ and $ \boldsymbol{\omega}_{1}=1(-1)0^{p-r-2}1(-1)0^{r-2}$.
\smallskip
\item  \label{prop:gap of words wu}
If $1\leq i \leq \max\{u_{+}, u_{-}\}$, then $g_{1}(\boldsymbol{\omega}_i)\le \max  \{p-r-1,r-1\}$.

\end{enumerate}

\end{proposition}

\begin{proof}   
Assertion  (\ref{prop:gap of words w01}) for $\boldsymbol{\omega}_0$ is immediate. Since $r$ and $p-r$  are at least 2,   
  \[\boldsymbol{\omega}_1= \boldsymbol{d}_{0}+\boldsymbol{d}_{1}=1(-1)0^{p-2}+0^{p-r}1(-1)0^{r-2}=1(-1)0^{p-r-2}1(-1)0^{r-2}.\]
Assertion (\ref{prop:gap of words wu}) is immediate from (\ref{gaps ib}) and (\ref{gaps iib}) in Lemma
\ref{lemma:gaps control}.  
\end{proof}

\subsection{Concatenation and gaps}\label{sec:concatenationandgaps} In this section we establish an upper bound on the gaps of the concatenation of $\boldsymbol{\omega}_i^{q/p}$ with $i\in\llbracket 1,(p-3)/2\rrbracket$. 
We first bound the gaps of $\boldsymbol{\omega}_i^{q/p}$, then those arising from the concatenation of two such words (see Lemma~\ref{lemma:concatenation of two basic words to the power}), and
finally, prove the desired upper bound in Proposition~\ref{prop:hasta p-3/2}.

The maximum gap $g_{1}(\boldsymbol{u}\boldsymbol{v})$ can exceed both $g_{1}(\boldsymbol{u})$ and $g_{1}(\boldsymbol{v})$, depending on the positions of their first and last nonzero letters. This motivates the following definition.

 The \emph{beginning} and \emph{end} of a nonzero word $\boldsymbol{u}\in \mathcal{A}^{+}$ are the positions of its first and last nonzero letters, respectively:
\[
\com{\boldsymbol{u}}=\min\{j: \boldsymbol{u}[j]\neq 0\}\quad \text{and}\quad \fin{\boldsymbol{u}}=\max\{j: \boldsymbol{u}[j]\neq 0\}.
\]
The following assertions hold:
\begin{align}
\label{eq:begendsuv}\com{\boldsymbol{uv}} 
 & =\com{\boldsymbol{u}} \quad \text{ and } \quad 
\fin{\boldsymbol{uv}}=|\boldsymbol{u}| + \fin{\boldsymbol{v}},
\\
\label{eq:gaps uv}
g_1(\boldsymbol{u}\boldsymbol{v}) &=\max\big\{g_1(\boldsymbol{u}),g_1(\boldsymbol{v}), |\boldsymbol{u}|+\com{\boldsymbol{v}}-\fin{\boldsymbol{u}} \big \} .
\end{align}

The following lemma provides upper bounds for $g_{1}(\boldsymbol{\omega}_i^{q/p})$ and $g_{1}(\boldsymbol{\omega}_i^{q/p}\boldsymbol{\omega}_{i+1}^{q/p})$.

\begin{lemma}\label{lemma:concatenation of two basic words to the power}
Let $5 \leq p < q$ be prime numbers and let $r \in \llbracket 2, p-2 \rrbracket$. 
 \begin{enumerate}
\item \label{lemma:concatenation of two basic words to the power 1} If $1\le i< \max\{u_{+},u_{-}\}$, then 
\[g_1(\boldsymbol{\omega}_i^{q/p} )\le\max\{p-r-1,r-1\}.
\]
\item \label{lemma:concatenation of two basic words to the power 2} If 
$1\le i<u_{+}-1$ or $u_{+}\le i<u_{-}-1$, then
 \[g_1(\boldsymbol{\omega}_i^{q/p}\boldsymbol{\omega}_{i+1}^{q/p})\le \max\{p-r-1,r-1\}.\]
  \end{enumerate}
\end{lemma}

\begin{proof} First recall that both $\boldsymbol{\omega}_{i}$ and $\boldsymbol{\omega}_{i}^{r/p}$ are nonzero words (see Lemma~\ref{lemma:wi satisface el criterio} and Table~\ref{table:values of omegai} for $u_{+}-1$).
Since $q > p$, from \eqref{eq:gaps uv}  
and Proposition~\ref{prop:gap of words wi} we obtain 
\[g_1(\boldsymbol{\omega}_{i}^{q/p}) \leq \max\{p-r-1, r-1,|\boldsymbol{\omega}_{i}|+\com{\boldsymbol{\omega}_{i}}-\fin{\boldsymbol{\omega}_{i}}\}.\]

We now prove the first assertion of the lemma. If $1\le i < u_{+}$, then $\com{\boldsymbol{\omega}_i}=0$ by Table \ref{table:values of omegai}. Moreover,  Lemma~\ref{lemma:gaps control} implies  $ \fin{\boldsymbol{\omega}_i}\ge p-r+1$. Thus,
\begin{align*}
|\boldsymbol{\omega}_{i}|+\com{\boldsymbol{\omega}_{i}} - \fin{\boldsymbol{\omega}_{i}}\le r-1. 
\end{align*}
If   $u_{+}\le i<u_{-}$, then  $\com{\boldsymbol{\omega}_i}=1$ by Table \ref{table:values of omegai}. Moreover, Lemma~\ref{lemma:gaps control} implies  $\fin{\boldsymbol{\omega}_i}\ge         p-r+2$. Thus,
\[ |\boldsymbol{\omega}_{i}|+\com{\boldsymbol{\omega}_{i}} - \fin{\boldsymbol{\omega}_{i}}\le r-1.\]     
The validity of \eqref{lemma:concatenation of two basic words to the power 1} follows.
 
We now prove the second assertion. From \eqref{eq:gaps uv} and the first part of this lemma 
\begin{align*}
g_1(\boldsymbol{\omega}_i^{q/p}\boldsymbol{\omega}_{i+1}^{q/p}) & = \max\{g_1(\boldsymbol{\omega}_{i}^{q/p}),g_1(\boldsymbol{\omega}_{i+1}^{q/p}), q +\com{\boldsymbol{\omega}_{i+1}^{q/p}}-\fin{\boldsymbol{\omega}_{i}^{q/p}} \}
\\[1ex]
& \leq \max\{p-r-1, r-1, q +\com{\boldsymbol{\omega}_{i+1}^{q/p}}-\fin{\boldsymbol{\omega}_{i}^{q/p}} \}.
\end{align*}
Since $q > p$,  from \eqref{eq:begendsuv} we deduce that 
\[
\com{\boldsymbol{\omega}_{i+1}^{q/p}}=\com{\boldsymbol{\omega}_{i+1}} \quad \mbox{ and } \quad 
\fin{\boldsymbol{\omega}_{i}^{q/p}}=q-r+\fin{\boldsymbol{\omega}_{i}^{r/p}}.
\]

\noindent If   $1\le i<u_{+} -1$, then  $\com{\boldsymbol{\omega}_{i+1}^{q/p}}=0$ and ${\fin{\boldsymbol{\omega}_{i}^{r/p}} \ge 1}$ by Lemma~\ref{lemma:gaps control}. Thus,
\[  
q +\com{\boldsymbol{\omega}_{i+1}^{q/p}}-\fin{\boldsymbol{\omega}_{i}^{q/p}} \leq 
 q- (q-r+1) = r- 1.\]
\noindent If  $u_{+}\le i<u_{-} -1$, then $\com{\boldsymbol{\omega}_{i+1}^{q/p}}=1$ and ${\fin{\boldsymbol{\omega}_{i}^{r/p}}\ge 2}$ by Lemma~\ref{lemma:gaps control}. Thus,
\[  
q +\com{\boldsymbol{\omega}_{i+1}^{q/p}}-\fin{\boldsymbol{\omega}_{i}^{q/p}} \leq 
 q + 1- (q-r+2) = r- 1.\]
In both cases, the bound holds, completing the proof.
\end{proof}

Since we know that $g_{1}(\boldsymbol{\omega}_0) = p-1$, the upper bound provided by the next proposition is an upper bound for $g_{2}(\boldsymbol{c}_{pq})$. 

\begin{proposition} \label{prop:hasta p-3/2}
Let $5 \leq p < q$ be prime numbers and let $r \in \llbracket 2, p-2 \rrbracket$. Then
\[g_1\left(\boldsymbol{\omega}_1^{q/p}\boldsymbol{\omega}_2^{q/p} \cdots \, \boldsymbol{\omega}_{\frac{p-3}{2}}^{q/p}\right)\le \max\{p-r-1,r-1\}.\] 
\end{proposition}
\begin{proof}
First, observe that for any nonzero words $\boldsymbol{u}_{1}$, $\boldsymbol{u}_2$,\ldots, $\boldsymbol{u}_{n}$ in $\mathcal{A}^{+}$, it follows that
\begin{equation}\label{eq:conca}
g_1(\boldsymbol{u}_{1}\boldsymbol{u}_{2}\cdots \boldsymbol{u}_{n})=\max\{g_1(\boldsymbol{u}_{1}\boldsymbol{u}_{2}), g_1(\boldsymbol{u}_{2}\boldsymbol{u}_{3}), \ldots, g_{1}(\boldsymbol{u}_{n-1}\boldsymbol{u}_{n})\}.
\end{equation}

\noindent \emph{Case $u_{+}\ge (p-1)/2$}. Since $u_{+}-1\ge (p-3)/2$, the second assertion of Lemma~\ref{lemma:concatenation of two basic words to the power} holds for $\boldsymbol{\omega}_i$, whenever $i \in \llbracket 1, (p-3)/2\rrbracket$. The result then follows from \eqref{eq:conca}. 

\smallskip

 \noindent \emph{Case $u_{+}\le (p-3)/2$}. Since $u_{-}\ge (p+3)/2$, Lemma~\ref{lemma:concatenation of two basic words to the power} together with \eqref{eq:conca}
 yield 
\begin{align}
\label{eq:gaps dos partes}
\begin{split}
g_1\left(\boldsymbol{\omega}_1^{q/p}\cdots \boldsymbol{\omega}_{u_{+}-1}^{q/p}\right) &\le \max\{p-r-1,r-1\}, \\[1ex] 
g_1\left(\boldsymbol{\omega}_{u_{+}}^{q/p}\cdots \boldsymbol{\omega}_{u_{-}-1}^{q/p}\right)&\le \max\{p-r-1,r-1\}.
\end{split}
\end{align}

Since $u_{-}-1\ge (p+1)/2$, the result follows by appealing to~\eqref{eq:gaps uv} to compute the gap of $
\boldsymbol{\omega}_1^{q/p}\cdots \boldsymbol{\omega}_{u_{+}-1}^{q/p} \boldsymbol{\omega}_{u_{+}}^{q/p}\cdots \boldsymbol{\omega}_{u_{-}-1}^{q/p}$. 
According to Table \ref{table:values of omegai} and \eqref{eq:begendsuv}, 
\[\com{\boldsymbol{\omega}_{u_{+}}^{q/p}\cdots \boldsymbol{\omega}_{u_{-}-1}^{q/p}}=\com{\boldsymbol{\omega}_{u_{+}}}= 1.\]
Moreover, Table~\ref{table:values of omegai} shows that $\boldsymbol{\omega}_{u_{+}-1}[r-1]\neq 0$, so $\fin{\boldsymbol{\omega}_{u_{+}-1}^{r/p}} = r-1$. Therefore, from~\eqref{eq:begendsuv},
\begin{align*}
\fin{\boldsymbol{\omega}_1^{q/p}\cdots \boldsymbol{\omega}_{u_{+}-1}^{q/p}} & =(u_{+}-2)q+\fin{\boldsymbol{\omega}_{u_{+}-1}^{q/p}}
\\
&=(u_{+}-1)q-r+\fin{\boldsymbol{\omega}_{u_{+}-1}^{r/p}}
\\ 
&=(u_{+}-1)q-1.
\end{align*}
Finally, the assertion of the proposition holds since
\[|\boldsymbol{\omega}_1^{q/p}\cdots \boldsymbol{\omega}_{u_{+}-1}^{q/p}|+\com{\boldsymbol{\omega}_{u_{+}}^{q/p}\cdots \boldsymbol{\omega}_{u_{-}-1}^{q/p}}-\fin{\boldsymbol{\omega}_1^{q/p}\cdots\boldsymbol{\omega}_{u_{+}-1}^{q/p}}=2.\]
\end{proof}

\subsection{End of the proofs of Theorem~\ref{thm:second gap} and Theorem~\ref{thm:teorema 1}} \label{subsec:completing proof of theorem 1} 

In Lemma~\ref{lemma:gap de a igual a gap c} we proved that the gapsets $G(\boldsymbol{a}_{pq})$ and $G(\boldsymbol{c}_{pq})$ are equal. We write 
\[\boldsymbol{c}_{pq}=\boldsymbol{\omega}_0^{q/p}\boldsymbol{\omega}_1^{q/p}\boldsymbol{b}_{pq}\quad
\mbox{ where }\quad \boldsymbol{b}_{pq}=\boldsymbol{\omega}_2^{q/p}\cdots \, \boldsymbol{\omega}_{\frac{p-5}{2}}^{q/p} \,\, \boldsymbol{\omega}_{\frac{p-3}{2}}^{\left(q- \frac{(p-1)}{2}\right)/p}.\]
Notice that $\boldsymbol{b}_{pq}$ is the empty word $\epsilon$ if and only if $p=5$. By Proposition~\ref{prop:gap of words wi}, we obtain 
\begin{align*}
\boldsymbol{\omega}_0^{q/p}& =\left(1(-1)0^{p-2}\right)^{\lfloor q/p\rfloor}1(-1)0^{r-2}, 
\\[1ex]
\boldsymbol{\omega}_{1}^{q/p}&=\left(1(-1)0^{p-r-2}1(-1)0^{ r-2}\right)^{\lfloor q/p\rfloor}\boldsymbol{\omega}_1^{r/p},
\end{align*}
where
\begin{align*}
\boldsymbol{\omega}_{1}^{r/p}
      =\begin{cases}  
      1(-1)0^{r-2}, &  \text{if }\, r<(p+1)/2
      \\[1ex]
      1(-1)0^{p-r-2}1, & \text{if }\, r=(p+1)/2
      \\[1ex]
      1(-1)0^{p-r-2}1(-1)0^{2r-p-2}, &\text{if }\, r>(p+1)/2.
        \end{cases}
\end{align*}
The cases $r=2$ and $r=p-2$ are covered by the convention $0^{0}=\epsilon$. 

Note that $u_{+}=2$ if and only if $r=(p+1)/2$. In this case, $\fin{\boldsymbol{\omega}_{1}^{r/p}}=r-1$. If $u_{+}\ge 3$, then  $\com{\boldsymbol{b}_{pq}}=\com{\boldsymbol{\omega}_2}=0$ (see Table~\ref{table:values of omegai}). In both cases,
\[G(\boldsymbol{c}_{pq})=G(\boldsymbol{\omega}_{0}^{q/p})\cup G(\boldsymbol{\omega}_{1}^{q/p})\cup G(\boldsymbol{b}_{pq}),\]
 since for any  $\boldsymbol{u}, \boldsymbol{v} \in\mathcal{A}^{+}$ satisfying either  
$\fin{\boldsymbol{u}}=|\boldsymbol{u}|-1$ or $\com{\boldsymbol{v}}=0$, we have $G(\boldsymbol{u} \boldsymbol{v})=G(\boldsymbol{u})\cup G(\boldsymbol{v})$.

The word $\boldsymbol{\omega}_0^{q/p}$ contributes to $G(\boldsymbol{c}_{pq})$ with $\lfloor q/p\rfloor$ gaps of length $p-1$. If $r>2$, it also provides $1$ gap of length $r-1$. 
  
The contribution of $\boldsymbol{\omega}_{1}^{q/p}$ depends on whether $r\geq {(p+1)}/{2}$ or $r< {(p+1)}/{2}$. If $r<{(p+1)}/{2}$, then $\boldsymbol{\omega}_{1}^{q/p}$ contributes  with $\lfloor q/p\rfloor$ gaps of length $p-r-1$ and  $\lfloor q/p\rfloor +1$ gaps of length $r-1$.  
If $r\ge {(p+1)}/{2}$, then there are $\lfloor q/p\rfloor$ gaps of length $r-1$, $\lfloor q/p\rfloor +1$ gaps of length $p-r-1$, and $1$ gap of length $2r-p-1$.  

Gaps of length $1$ are irrelevant. Thus, when $r=2$, gaps of length $r-1$ are not considered. The same applies when $r=p-2$, where gaps of length $p-r-1$ are ignored.

From Proposition~\ref{prop:hasta p-3/2} we deduce that the length of the gaps of $\boldsymbol{\omega}_1^{q/p}$ and of $\boldsymbol{b}_{pq}$ is upper bounded by $\max\{p-r-1, r-1\}$. In summary, we conclude:
\begin{itemize}
    \item Only $\boldsymbol{\omega}_{0}^{q/p}$ contributes to the maximum gap of $\boldsymbol{c}_{pq}$, which is equal to $p-1$ and it contributes with $\lfloor q/p\rfloor$ gaps of this length.

    \item The second gap of $\boldsymbol{c}_{pq}$ is equal to $\max\{p-r-1, r-1\}$. We can find at least $\lfloor q/p\rfloor$ such gaps in $\boldsymbol{\omega}_{1}^{q/p}$. 
\end{itemize}

To complete the proof of Theorem~\ref{thm:second gap} and conclude about the number and the positions of the maximum and second gaps, we first observe that 
\begin{align*}
  \boldsymbol{\omega}_{0}^{q/p} = \boldsymbol{c}_{pq}[0:q-1] = \boldsymbol{a}_{pq}[0:q-1],
\\
   \boldsymbol{\omega}_{1}^{q/p} = \boldsymbol{c}_{pq}[q:2q-1] = \boldsymbol{a}_{pq}[q:2q-1].
\end{align*}
Since  $\boldsymbol{a}_{pq}$ is  reciprocal and $G(\boldsymbol{a}_{pq}) = G(\boldsymbol{c}_{pq})$, the gaps from $\boldsymbol{\omega}_{0}^{q/p}$ and
$\boldsymbol{\omega}_{1}^{q/p}$ appear symmetrically in $\boldsymbol{a}_{pq}$. This symmetry determines both the number and the positions of the maximum and second gaps. This proves Theorem~\ref{thm:second gap} and thus, Theorem~\ref{thm:teorema 1}.

\section{Gaps and a representation theorem for \texorpdfstring{$\Phi_{pq}$}{} when \texorpdfstring{$q \equiv \pm 1\!\!\!\mod \!p$}{}  } \label{section:gaps and a representation theorem}

In Section \ref{sec: rp}, we prove  a representation theorem for $\boldsymbol{a}_{pq}$ when $q\equiv \pm 1 \!\!\!\mod \!p $, stated in Theorem~\ref{thm:descripcion word pm1}. This representation uses only integer powers of words whose last letter is nonzero, making it well-suited to study gaps.
 
For $p=3$, we recover a historical result due to Habermehl, Richardson, and Szwajkos~\cite{Habermehl64} (see Corollary~\ref{coro: HRS}). In Section~\ref{subsec:completing proof of theorem 2}, we complete the proof of Theorem~\ref{thm:teorema 2}.

\subsection{The representation theorem}\label{sec: rp}
\begin{theorem} \label{thm:descripcion word pm1}
The following holds for the cyclotomic word $\boldsymbol{a}_{pq} $.
\begin{enumerate}
    \item If  $ q = 1 \!\!\!\mod \!p$ and $\boldsymbol{v}_i=0^i(-1)0^{p-i-2}1$  for each $i\in \llbracket 0,p-2\rrbracket$, then
\[\boldsymbol{a}_{pq}=1\boldsymbol{v}_0^{\lfloor q/p\rfloor} \boldsymbol{v}_1^{\lfloor q/p\rfloor}\boldsymbol{v}_2^{\lfloor q/p\rfloor}\cdots \boldsymbol{v}_{p-3}^{\lfloor q/p\rfloor} \boldsymbol{v}_{p-2}^{\lfloor q/p\rfloor}.\ \]
 
\item If $ q=p-1\!\!\!\mod \!p$ and $\boldsymbol{v}_i=0^{p-i-2}10^i(-1)$ for each $i\in \llbracket 0,p-2\rrbracket$, then
\[\boldsymbol{a}_{pq}=1(-1)\boldsymbol{v}_0^{\lfloor q/p\rfloor} \boldsymbol{v}_1^{\lfloor q/p\rfloor+1}\boldsymbol{v}_2^{\lfloor q/p\rfloor+1}\cdots \boldsymbol{v}_{p-3}^{\lfloor q/p\rfloor+1} \boldsymbol{v}_{p-2}^{\lfloor q/p\rfloor}1. \]
\end{enumerate}

\end{theorem}

\begin{proof} 
The proof relies on the expression of $\boldsymbol{a}_{pq}$ given in Theorem~\ref{thm:representation theorem}.

The case 
$p=3$ is immediate from the expressions of $\boldsymbol{\omega}_{0}$ and $\boldsymbol{\omega}_{1}$. 
For $p\ge 5$, we first compute the basic words $\boldsymbol{\omega}_i$ for $i\in \llbracket 0,p-2\rrbracket$; then, we compute $\boldsymbol{\omega}_i^{q/p}$ and their concatenation; finally, an inductive argument completes the proof.

\smallskip

\noindent \emph{Case $q=1\!\!\!\mod \!p$.} Since $u_{+} = 1$ and $u_{-} = p-1$, it follows from \eqref{eq:formula for I} that 
\begin{align*}
{\sf I}_{+}(j) = \begin{cases}
\quad 0, & \text{if }\, j = 0
\\[1ex]
p-j, & \text{if }\, j \in \llbracket 1, p-1 \rrbracket 
\end{cases} \quad \mbox{and} \quad 
{\sf I}_{-}(j) = 
\begin{cases}
\phantom{p-\,} 1, & \text{if }\, j = 0
\\[1ex]
\phantom{p-\,} 0, & \text{if }\, j = 1
\\[1ex]
p-j+1, & \text{if }\, j \in \llbracket 2, p-1 \rrbracket. 
\end{cases} 
\end{align*}
From Lemma~\ref{lemma:wieI}, we have 
\[
\boldsymbol{\omega}_{i}[0]=0 \ \text{for } i \in \llbracket 1, p-2\rrbracket \quad \text{and}\quad \boldsymbol{\omega}_i[1]=-1 \ \text{for}\ i \in \llbracket 0, p-2\rrbracket, 
\]
and 
\[\boldsymbol{\omega}_{i}[j] = 1 \quad \text{ if and only if }\quad p-j={\sf I}_{+}(j)\le i<{\sf I}_{-}(j)=p-j+1.\]
 Thus, $\boldsymbol{\omega}_{i}=0(-1)0^{p-i-2}10^{i-1}$ for each $i\in \llbracket 1, p-2\rrbracket$.
 
 We use the identity $\left(\boldsymbol{u}\boldsymbol{v}\right)^n=\boldsymbol{u}\left(\boldsymbol{v}\boldsymbol{u}\right
 )^{n-1} \boldsymbol{v}$, with $\boldsymbol{u}=(-1)0^{p-i-2}1$ and $\boldsymbol{v}=0^{i-1}$, and the equality $\boldsymbol{\omega}_i^{1/p}=0$ to obtain that 
\[\boldsymbol{\omega}_i^{q/p}=\left(0(-1)0^{p-i-2}10^{i-1}\right)^{\lfloor q/p\rfloor}0=0(-1)0^{p-i-2}1\left(0^{i}(-1)0^{p-i-2}1\right)^{\lfloor q/p\rfloor-1}0^i.\]

If $ \boldsymbol{v}_i=0^i(-1)0^{p-i-2}1$ for $ i\in \llbracket 1,p-3\rrbracket$, then
 \begin{align*}
\boldsymbol{\omega}_{i}^{q/p}\boldsymbol{\omega}_{i+1}^{q/p}=0(-1)0^{p-i-2}1\boldsymbol{v}_i^{\lfloor q/p\rfloor-1}\boldsymbol{v}_{i+1}^{\lfloor q/p\rfloor}0^{i+1}.
\end{align*} 
By induction on $i$, using the explicit forms
 $\boldsymbol{v}_{1}=0(-1)0^{p-3}1$ and $\boldsymbol{\omega}_{0}^{q/p}=1\boldsymbol{v}_0^{\lfloor q/p\rfloor}$, we obtain 
\[ \boldsymbol{\omega}_{0}^{q/p}\boldsymbol{\omega}_{1}^{q/p}\cdots \boldsymbol{\omega}_{p-2}^{q/p}=1\boldsymbol{v}_{0}^{\lfloor q/p\rfloor}\boldsymbol{v}_{1}^{\lfloor q/p\rfloor}\cdots \boldsymbol{v}_{p-2}^{\lfloor q/p\rfloor}0^{p-2}.\]
Since $\boldsymbol{a}_{pq}$ is the prefix of length $\varphi(pq)=(p-1)(q-1) +1$ of the left-hand side of the previous identity, our expression is obtained by removing the suffix $0^{p-2}$ from the right-hand side.

\smallskip

\noindent \emph{Case $ q=p-1\!\!\!\mod \!p$.}
Since $u_{+}=p-1$ and $u_{-} = 1$, it follows from \eqref{eq:formula for I} that 
\begin{align*}
 {\sf I}_{+}(j) = \begin{cases}
0, & \text{if }\, j = 0
\\[1ex]
j, & \text{if }\, j \in \llbracket 1, p-1 \rrbracket
\end{cases}\quad \quad
& \mbox{ and } \quad \quad
{\sf I}_{-}(j) = \begin{cases}
p-1, & \text{if }\, j = 0
\\[1ex]
j-1, &\text{if }\, j \in \llbracket 1, p-1 \rrbracket.
\end{cases} 
\end{align*}
From Lemma~\ref{lemma:wieI} it follows immediately that
$\boldsymbol{\omega}_i[0]=1 \ \text{for} \ i \in \llbracket 0, p-2\rrbracket$
and that 
\[\boldsymbol{\omega}_{i}[j] = -1\quad \text{if and only if}\quad j-1={\sf I}_{-}(j)\le i <{\sf I}_{+}(j)=j.\]
Thus, $\boldsymbol{\omega}_i=10^{i}(-1)0^{p-i-2}$ for each $i\in \llbracket 0, p-2\rrbracket$.
 
 The identity $\left(\boldsymbol{u}\boldsymbol{v}\right)^n\boldsymbol{u}= \boldsymbol{u}\left(\boldsymbol{v}\boldsymbol{u}\right)^n$,
   with $\boldsymbol{u}=10^i(-1)$ and $\boldsymbol{v}={{0^{p-i-2}}}$, and the equality $\boldsymbol{\omega}_i^{(p-1)/p}= 10^{i}(-1)0^{p-i-3}$ give
\[\boldsymbol{\omega}_{i}^{q/p}=\left(10^i(-1)0^{p-i-2}\right)^{\lfloor q/p\rfloor}10^i(-1)0^{p-i-3} =10^i(-1)\boldsymbol{v}_{i}^{\lfloor q/p\rfloor}0^{p-i-3},\quad i\neq p-2. \]
For $0\le i\le p-4$, we obtain 
\[
\boldsymbol{\omega}_{i}^{q/p}\boldsymbol{\omega}_{i+1}^{q/p} =10^i(-1)\boldsymbol{v}_{i}^{\lfloor q/p\rfloor}\boldsymbol{v}_{i+1}^{\lfloor q/p\rfloor+1}0^{p-i-4}.
\]
If $i=p-2$, then the following identities hold 
\[\boldsymbol{\omega}_{p-2}=\boldsymbol{v}_{p-2}=10^{p-2}(-1)\quad \text{and}\quad \boldsymbol{\omega}_{p-2}^{(p-1)/p}=10^{p-2}.\]
An inductive argument and the previous equality prove that
 \[\boldsymbol{\omega}_{0}^{q/p}\boldsymbol{\omega}_{1}^{q/p}\cdots \boldsymbol{\omega}_{p-2}^{q/p}=1(-1)\boldsymbol{v}_{0}^{\lfloor q/p\rfloor}\boldsymbol{v}_{1}^{\lfloor q/p\rfloor+1}\cdots \boldsymbol{v}_{p-3}^{\lfloor q/p\rfloor+ 1}\boldsymbol{v}_{p-2}^{\lfloor q/p\rfloor}10^{p-2}.\]

To complete the proof of the second assertion, we proceed as in the first one by considering the prefix of length $(p-1)(q-1) + 1$.
\end{proof}

 The following is a historical result due to Habermehl, Richardson, and Szwajkos~\cite{Habermehl64}. 
\begin{corollary}\label{coro: HRS} Let $p=3$ and let $\boldsymbol{v}_{0}$ and $\boldsymbol{v}_{1}$ be the words defined in Theorem~\ref{thm:descripcion word pm1}. \begin{enumerate}
\item If $r=1$, then $\boldsymbol{a}_{3 q} = 1\boldsymbol{v}_{0}^{(q-1)/3} \boldsymbol{v}_{1}^{(q-1)/3}$  with $\boldsymbol{v}_{0} = (-1)01$ and $\boldsymbol{v}_{1} = 0(-1)1$. 
\smallskip
\item If $r=2$, then $\boldsymbol{a}_{3 q} = 1(-1)\boldsymbol{v}_{0}^{(q-2)/3} \boldsymbol{v}_{1}^{(q-2)/3} 1$ with $\boldsymbol{v}_{0} = 01(-1)$  and  $\boldsymbol{v}_{1} = 10(-1)$. 
\end{enumerate}
\end{corollary}

\subsection{Proof of Theorem \ref{thm:teorema 2}} \label{subsec:completing proof of theorem 2} 
If $p=3$, the result follows from Corollary \ref{coro: HRS}. 
Now assume $p\ge 5$. Recall that a gapblock $0^{k}$ inside a word produces a gap of length $k+1$ (see Section~\ref{sect: results-notations}).
By Theorem~\ref{thm:descripcion word pm1}, in both cases $r=1$ and $r=p-1$, the words $\boldsymbol{v}_{0}$ and $\boldsymbol{v}_{p-2}$ each produce a gap of length $p-1$. Moreover, for $i \in \llbracket 1, p-3\rrbracket$, each of $\boldsymbol{v}_{i}$ and $\boldsymbol{v}_{p-i-2}$ provides gaps of length $i+1$ and $p-i-1$, respectively. The crucial fact here is that in any case, for $i \in \llbracket 0,p-2 \rrbracket$ the last letter of $\boldsymbol{v}_{i}$ is either $1$ or $-1$. Thus, for $i\in \llbracket 2,p-1\rrbracket$, the number of gaps of length $i$ of $\boldsymbol{a}_{pq}$ is twice the number of gaps of length $i$ of $\boldsymbol{v}_{i-1}^{\lfloor q/p\rfloor}$.

Hence, if $r=1$, there are $2\lfloor q/p\rfloor$ gaps of length $i$ for $i \in \llbracket 2, p-1\rrbracket$. If $r=p-1$, then there are $2\lfloor q/p\rfloor$ gaps of length $p-1$ and $2\lfloor q/p\rfloor + 2$ gaps of length $i$ for $i \in \llbracket 2, p-2\rrbracket$.

\subsubsection*{Acknowledgments}
We thank the anonymous referee for the careful reading of the manuscript and for the valuable suggestions that helped improve the overall presentation of the paper. We are grateful to Brigitte Vall\'ee and Alfredo Viola for many encouraging discussions on this subject.

\noindent This work was partially supported by Projects UNGS 30/3373 and 30/3429, and Project STIC AmSud 20STIC-06 EPAA.


\begin{thebibliography}{10}

\bibitem{AlKateeb20}
A.~Al-Kateeb, M.~Ambrosino, H.~Hong, and E.~Lee.
\newblock Maximum gap in cyclotomic polynomials.
\newblock {\em J. Number Theory}, 220:1--15, 2021.

\bibitem{Cafure21}
A.~Cafure.
\newblock A story about cyclotomic polynomials: the minimal polynomial of
  {$2\cos(2\pi/n)$} and its constant term.
\newblock {\em Math. Mag.}, 94(5):325--338, 2021.

\bibitem{CaCe21}
A.~Cafure and E.~Cesaratto.
\newblock Binary cyclotomic polynomials: representation via words and
  algorithms.
\newblock {\em LNCS}, 124(1):65--77, 2021.

\bibitem{Camburu16}
O.~Camburu, E.~Ciolan, F.~Luca, P.~Moree, and I.~Shparlinski.
\newblock Cyclotomic coefficients: gaps and jumps.
\newblock {\em J. Number Theory}, 163:211--237, 2016.

\bibitem{Habermehl64}
H.~Habermehl, S.~Richardson, and M.~A. Szwajkos.
\newblock A note on coefficients of cyclotomic polynomials.
\newblock {\em Math. Mag.}, 37:183--185, 1964.

\bibitem{Hong12}
H.~Hong, E.~Lee, H.-S Lee, and C.-M. Park.
\newblock Maximum gap in (inverse) cyclotomic polynomial.
\newblock {\em J. Number Theory}, 132(10):2297--2315, 2012.

\bibitem{LaLe99}
T.Y. {Lam} and K.H. {Leung}.
\newblock On the cyclotomic polynomial $\phi\sb{pq}(x)$.
\newblock {\em Amer. Math. Mon.}, 103(7):562--564, 1996.

\bibitem{Lenstra79}
H.~W. Lenstra Jr.
\newblock Vanishing sums of roots of unity.
\newblock In {\em Proceedings, {B}icentennial {C}ongress {W}iskundig
  {G}enootschap ({V}rije {U}niv., {A}msterdam, 1978), {P}art {II}}, volume 101
  of {\em Math. Centre Tracts}, pages 249--268. Math. Centrum, Amsterdam, 1979.

  \bibitem{Lothaire97} M.~Lothaire. \newblock  \emph{Combinatorics on Words.} \newblock {\em Cambridge University Press}, 
   {1997}.
   

\bibitem{Migotti83}
A.~{Migotti}.
\newblock {Zur Theorie der Kreisteilung.}
\newblock {\em {Wien. Ber.}}, 87:8--14, 1883.

\bibitem{Moree14}
P.~Moree.
\newblock Numerical semigroups, cyclotomic polynomials, and {B}ernoulli
  numbers.
\newblock {\em Amer. Math. Monthly}, 121(10):890--902, 2014.

\bibitem{Niven56}
I.~Niven.
\newblock {\em Irrational numbers}, volume~11 of {\em Carus Math. Monogr.}
\newblock Mathematical Association of America, Washington, DC, 1956.

\bibitem{Sanna22}
C.~Sanna.
\newblock A survey on coefficients of cyclotomic polynomials.
\newblock {\em Expo. Math.}, 40(3):469--494, 2022.

\bibitem{Zhang16}
B.~Zhang.
\newblock Remarks on the maximum gap in binary cyclotomic polynomials.
\newblock {\em Bull. Math. Soc. Sci. Math. Roumanie (N.S.)},
  59(107)(1):109--115, 2016.

\end{thebibliography}
\end{document}